\theoremstyle{plain}
 \newtheorem{teo}{Theorem}[section]
 \newtheorem{pro}[teo]{Proposition}
 \newtheorem{cor}[teo]{Corollary}
 \newtheorem{lm}[teo]{Lemma}
 \newtheorem*{defi}{Definition} 
\theoremstyle{definition}
\newtheorem{ex}[teo]{Example} 
\newtheorem*{remark}{Remark}
\newcommand{\nc}{\newcommand}
 \nc{\ad}{\operatorname{ad}} 
 \nc{\Ad}{\operatorname{Ad}}
 \nc{\coad}{\operatorname{coad}} 
 \nc{\ct}{\operatorname{T}}
 \nc{\rank}{\operatorname{rank}} 
 \nc{\Irr}{\operatorname{Irr}}
 \nc{\End}{\operatorname{End}} 
 \nc{\Aut}{\operatorname{Aut}}
 \nc{\Inn}{\operatorname{Inn}} 
 \nc{\Der}{\operatorname{Der}}
 \nc{\Dera}{\operatorname{Dera}} 
 \nc{\Auto}{\operatorname{Auto}}
 \nc{\GL}{\operatorname{GL}}
 \nc{\SL}{\operatorname{SL}}
 \nc{\coord}{\operatorname{coord}}
 \renewcommand{\span}{\operatorname{span}}
 \nc{\codim}{\operatorname{codim}}
  \nc{\pr}{\operatorname{pr}}
 \newcommand{\R}{\mathbb R}
 \newcommand{\Q}{\mathbb Q}
\newcommand{\N}{\mathbb N}
\newcommand{\Z}{\mathbb Z}
\newcommand{\mg}{\mathfrak g }
\newcommand{\mn}{\mathfrak n }
\newcommand{\mk}{\mathfrak k }
\newcommand{\mc}{\mathfrak c }
\newcommand{\mv}{\mathfrak v }
\newcommand{\mh}{\mathfrak h }
\newcommand{\ma}{\mathfrak a }
\newcommand{\mgg}{\mathfrak g }
\newcommand{\tX}{ \tilde{X} }
\newcommand{\tY}{ \tilde{Y} }
\newcommand{\tomega}{ \tilde{\omega} }
\newcommand{\tOmega}{ \tilde{\Omega} }
\nc{\rmc}{\textrm{\rm C}}
\nc{\rmd}{\textrm{\rm D}}
\nc{\rad}{\textrm{\rm Rad}}
\newcommand{\lela}{\left \langle}
\newcommand{\rira}{\right \rangle}
\newcommand{\lra}{\longrightarrow}
\newcommand{\bs}{\backslash}
\newcommand{\X}{X+\xi}
\newcommand{\J}{\mathcal{J}}
\newcommand{\TM}{TE\oplus T^*E}
\newcommand{\Jj}{\mathcal{J}_{J}}
\begin{document}
\title[$T$-duality on nilmanifolds]{$T$-duality on nilmanifolds}

\author{Viviana del Barco}
\email{delbarc@ime.unicamp.br}

\address{UNR-CONICET (Argentina) and IMECC-UNICAMP (Brazil)}

\author{Lino Grama}
\email{linograma@gmail.com}
\address{IMECC-UNICAMP}

\author{Leonardo Soriani}
\email{leo.soriani@gmail.com}
\address{IMECC-UNICAMP}

 \date{\today}

\begin{abstract} 
We study generalized complex structures and $T$-duality (in the sense of Bouwknegt, Evslin, Hannabuss and Mathai) on Lie algebras and construct the corresponding Cavalcanti and Gualtieri map. Such a construction is called {\em Infinitesimal $T$-duality}. As an application we deal with the problem of finding symplectic structures on 2-step nilpotent Lie algebras. We also give a criteria for the integrability of the infinitesimal $T$-duality of Lie algebras to topological $T$-duality of the associated nilmanifolds.
\end{abstract}

\thanks{V. del Barco supported by FAPESP grant 2015/23896-5.\\
L. Grama supported by FAPESP grant 2016/22755-1 and 2012/18780-0.\\
L. Soriani  supported by FAPESP grant 2015/10937-5.}

\keywords{$T$-duality, generalized complex geometry, nilmanfiold, central extension}

\subjclass{53C30, 
22E25, 
17B01, 
81T30, 
53D18 
}

\maketitle

\section{Introduction}

$\indent$ 

$T$-duality is an operation on $2$-dimensional conformal field theory sigma-models. It interchanges some geometric information in the target space and the resulting conformal field theory is equivalent. But, as done by Bouwknegt, Evslin, Hannabuss and Mathai in \cite{BEM} and \cite{BHM}, it is possible to study only the topological questions related to T-duality. This is our starting point.

The link between $T$-duality and generalized complex structures has been given by Cavalcanti and Gualtieri in \cite{CG}. They realized $T$-duality as an isomorphism between Courant algebroids, which depends of the choice of a closed $3$-form, of topologically distinct manifolds. Under certain conditions one can interchange ``complex'' and ``symplectic'' structures between $T$-dual manifolds; the integrability of these structures depends on the $3$-forms $H$, this is the reason of the quotation marks.

In this work we study $T$-duality in the context of nilmanifolds, with invariant $H$-flux. These are homogeneous compact manifolds associated to nilpotent Lie groups admitting lattices which carry a natural structure of torus bundles. Using algebraic constructions at Lie the algebra level, we are able to present, under certain conditions, an explicit construction of the $T$-dual of a nilmanifold. The $T$-dual is also a nilmanifold and the aforementioned restrictions are related to $H$ being an integral class. This invariant context permits us to obtain conclusive results and also to work explicitly with non-trivial 3-forms $H$. 

Our description of $T$-duality allows us to apply the results to the study of invariant symplectic structures on $2$-step nilpotent Lie groups following the spirit of the {\em mirror symmetry} program \cite{SYZ}: we understand the symplectic geometry of a manifold $E$ via the complex geometry of its mirror $E^\vee$, which, in our case is the $T$-dual manifold of $E$. The study of symplectic structures on nilpotent Lie groups (and in the corresponding nilmanifolds) is a very active topic in invariant geometry (see \cite{CdB,wang2013classification,RG} and references therein). An application in the context of generalized $G_2$-structures is presented in \cite{dBG}.

In this paper nilmanifolds are described as homogeneous spaces where $E=\Lambda\bs G$ and we consider invariant forms $H$ on them; here $G$ is a nilpotent Lie group and $\Lambda$ is a discrete cocompact subgroup. We address the questions of existence and constructions of $T$-duals, and also uniqueness of such. As usual, the invariant geometry of these homogeneous manifolds is evinced at the Lie algebra $\mgg$ of $G$. Because of their natural structure as torus bundles, nilmanifolds have already appeared as primary examples in the context of $T$-duality (see \cite{BHM06,CG,Mat} for instance). This work fully describes $T$-duality within this family, using their particular topology, algebraic and differential structure.

In the first part of the present paper, based on the works of Bouwknegt, Evslin, Hannabus and Mathai and of Cavalcanti and Gualtieri, we define a $T$-duality between Lie algebras which we call {\em Infinitesimal $T$-duality}. This is a general construction valid for any real Lie algebra with nontrivial center. We define the corresponding {\em Cavalcanti and Gualtieri map} establishing the one to one correspondence of generalized complex structures between dual Lie algebras. We notice that our methods differ from those in \cite{CLP} where an algebraic duality between Lie algebras is also considered; in fact our definition is independent of the existence of generalized complex structures on the Lie algebra.

In the second part of this work we deal with the question of the integrability of the {\em Infinitesimal $T$-duality}. This means, given two infinitesimal $T$-dual nilpotent Lie algebra $(\mathfrak{n}_1,H_1)$ and $(\mathfrak{n}_2,H_2)$, when is it possible to find lattices $\Lambda_1$ and $\Lambda_2$ in such way that the compact nilmanifolds $E_i:= \mathfrak{n}_i/\Lambda_i$, $i=1,2$ are topologically $T$-dual (in the sense of Bouwknegt, Evslin, Hannabuss and Mathai). We answer this positively: if $\mathfrak{n}_1$ admits a lattice and $H$ satisfies a rational condition, then $\mn_2$ also admits lattices and the $T$-duality of the nilmanifolds is guaranteed. We show examples where there is more than one lattice on $\mn_2$ giving the duality. In particular, we prove that for a given pair $(E,H)$, there is a family of non diffeomorphic manifolds $\{E_j\}_{j\in \N}$ which are $T$-dual to $(E,H)$, for some 3-forms.

\section{Preliminaries}

\subsection{Generalized complex structures}
Let us recall some standard facts about generalized complex geometry. For details see \cite{gualtieri}. Let $E$ be a smooth $n$-dimensional manifold and $H\in \varOmega^3(E)$  be a closed $3$-form.The sum of the tangent and cotangent bundle ${\TM}$ admits a natural symmetric bilinear form with signature $(n,n)$ defined by $$\langle X+\xi,Y+\eta \rangle=\frac{1}{2}(\eta(X)+\xi(Y)).$$ One can also define a bracket on the sections of ${\TM}$ (the Courant bracket) by $$[\X,Y+\eta]_H=[X,Y]+\mathcal{L}_X\eta-i_Yd\xi+i_Xi_YH.$$

\begin{defi}
A generalized complex structure on $(E,H)$ is an orthogonal automorphism $\J:\TM \to \TM$ such that  $\J^2=-1$ and its $i$-eigenbundle is involutive with respect to the Courant bracket.
\end{defi}

We remark that complex and symplectic structures are examples of ge\-neralized complex structures. If $J$ and $\omega$ are complex and symplectic structures respectively on $E$, then $$\J_J:=\left(\begin{array}{cc}
 -J & 0\\
  0 & J^* \end{array}\right)\ \mbox{and}\  \J_{\omega}:= \left(\begin{array}{cc}
0 & -\omega^{-1}\\
\omega & 0\end{array}\right)$$ are generalized complex structures on $E$ for $H=0$.
\medskip

When $E$ is a Lie group $G$ one can consider invariant generalized complex structures. In this case we assume $H$ to be a left invariant closed 3-form on $G$, which is identified with an alternating 3-form on the Lie algebra $\mg$ of $G$ and closed with respect to the Chevalley-Eilenberg differential. The Lie group also acts by left translations on $TG\oplus T^*G$
$$g\cdot (X+\xi)=(L_g)_*X+ (L_{g^{-1}})_*\xi.$$
A generalized complex structure on $(G,H)$ is said to be left invariant if it is equivariant with respect to this action. A left invariant generalized complex structure $\mathcal J$ is identified with the linear map it induces at the Lie algebra $J:\mgg\oplus\mgg^*\lra \mgg\oplus\mgg^*$ (see also \cite{ABDF}).  

Therefore, when restricting to the invariant context we endow $\mgg\oplus \mgg^*$ with the Courant bracket 
\begin{equation}\label{courantliealg}
[\X,Y+\eta]_H=[X,Y]+\iota_Xd\eta-i_Yd\xi+i_Xi_YH, \qquad X,Y\in \mgg,\; \xi,\eta\in \mgg^*.\end{equation}
We study linear maps $J:\mgg\oplus\mgg^*\lra \mgg\oplus\mgg^*$ satisfying $J^{2}=-1$, preserving the natural metric  
\begin{equation}\label{metriclie}
\lela \X,Y+\eta\rira=\frac12 (\eta(X)+\xi(Y)) \qquad X,Y\in \mgg,\; \xi,\eta\in \mgg^* \end{equation}
 and such that its $i$-eigenspace is involutive under the bracket \eqref{courantliealg}.
 
Notice that $\mgg\oplus \mgg^*$ with the bracket in \eqref{courantliealg} is a Lie algebra. When $H=0$ it is the semidirect product of $\mgg$ by $\mgg^*$ by the coadjoint action.
 
If $\Lambda$ is a discrete cocompact subgroup of $G$, any left invariant closed 3-form $H$ is induced to the quotient $\Lambda \bs G$. Moreover, any invariant generalized complex structure on $(G,H)$ descends to a generalized complex structure on $(\Lambda\bs G,H)$.

\subsection{Topological $T$-duality}\label{BHM}

Let us start with the definition of topological $T$-duality for torus bundle equipped with a closed $3$-form.
\begin{defi}
Let $E$ and $E^\vee$ be principal fiber bundles with  $k$-dimensional tori $T$ and $T^\vee$ as the fiber and over the same base $M$ and let $H \in \varOmega^3(E)$, $H^\vee \in \varOmega^3(E^\vee)$ be closed invariant $3$-forms. Let $E \times_M E^\vee$ be the product bundle and consider the diagram
$$\xymatrix{
    &(E \times_M E^\vee,p^*H-p^{\vee*}H^\vee) \ar[ld]^{p}  \ar[rd]_{p^{\vee}}& \\
(E,H)\ar[dr]&  & (E^{\vee},H^\vee).\ar[ld]\\
               & M  & }$$

We define $(E,H)$ and $ (E^{\vee},H^\vee)$ to be $T$-dual if $p^*H-p^{\vee*}H^\vee=dF$, where $F \in \varOmega^2(E \times_M E^\vee)$ is a $2$-form $T\times T^\vee$-invariant and non-degenerate on the fibers. The product $E \times_M E^\vee$ is called correspondence space.
\end{defi}

\begin{remark}
This is the definition of $T$-duality found in \cite{CG} and it is weaker than the one used in \cite{BEM,BHM,BRS,BS}. In the stronger definition the forms $H$, $H^\vee$ and $F$ represent integral cohomology classes and $F$ induces an isomorphism $H_1(T,\mathbb{Z})\to H^1(T^\vee,\mathbb{Z})$.
\end{remark}

Given a pair of $T$-dual torus bundles $(E,H)$ and $ (E^{\vee},H^\vee)$, Cavalcanti and Gualtieri in \cite{CG} define an isomorphism between the space of invariant sections
\begin{equation}
\varphi: (\TM)/T \to (TE^\vee\oplus T^*E^\vee)/T^\vee.
\end{equation}

Such isomorphism preserves the natural bilinear form and the Courant bracket twisted by the $3$-forms  $H$ and $\tilde{H}$. Using this isomorphism one can send $T$-invariant generalized complex structure from  $E$ to  $E^\vee$.

Using the 2-form $F$ on the correspondence space of $T$-dual pairs, the isomorphism $\varphi$ is given explicitly by 
\begin{equation}\label{fi}
\varphi(\X)= p^{\vee}_*(\hat{X})+p^*\xi-F(\hat{X}),
\end{equation}
where $\hat{X}$ is the unique lift of $X$ to $E \times_M E^\vee$ such that $p^*\xi-F(\hat{X})$ is a basic $1$-form. The existence and uniqueness is consequence of the non-degeneracy of $F$. 

\begin{teo}[Cavalcanti and Gualtieri, \cite{CG}]\label{fiso}
The map $\varphi$ is a isomorphism of Courant algebroids; that is, for all $u,v \in (\TM)/T$
$$\langle \varphi(u),\varphi(v) \rangle=\langle u,v \rangle\ \ \mbox{ and }\ \ [\varphi(u),\varphi(v)]_{H^\vee}= \varphi([u,v]_H).$$
\end{teo}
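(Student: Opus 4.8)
The plan is to verify the two identities directly from the explicit formula \eqref{fi}, working on the correspondence space $E\times_M E^\vee$ where both structures pull back and where the lifts $\hat X$ live. First I would set up notation: write $u=\X$, $v=\Y$ with $X,Y$ vector fields on $E$ and $\xi,\eta$ basic one-forms or, more precisely, use the decomposition of invariant sections of $\TM$ coming from the torus bundle structure. The key structural fact to exploit is that on the correspondence space the lift $\hat X$ is characterized by $p^*\xi - F(\hat X)$ being basic; equivalently $\hat X$ is the unique lift whose "vertical part with respect to $T^\vee$" is pinned down by $F$ and $\xi$. I would record the elementary consequences: $p_*\hat X = X$, and for two such lifts $\widehat{[X,Y]} = [\hat X,\hat Y]$ up to a vertical correction that $F$ controls, because the bracket of lifts projects correctly under $p$.

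For the metric identity, the computation is short: pair $\varphi(u)=p^\vee_*\hat X + p^*\xi - F(\hat X)$ with $\varphi(v)=p^\vee_*\hat Y + p^*\eta - F(\hat Y)$ using \eqref{metriclie} adapted to $E^\vee$. The cross terms produce $\tfrac12\big((p^*\eta - F(\hat Y))(p^\vee_*\hat X) + (p^*\xi - F(\hat X))(p^\vee_*\hat Y)\big)$; one then evaluates $p^*\xi - F(\hat X)$ and $p^*\eta - F(\hat Y)$ on the lifts $\hat X,\hat Y$ on the correspondence space (legitimate since these one-forms are basic, hence determined by their values, and $p^\vee_*\hat X$ read back on $E\times_M E^\vee$ differs from $\hat X$ only by a $p$-vertical vector on which basic forms vanish) and uses the defining relation together with the antisymmetry of $F$ to see that $F(\hat X,\hat Y)$ contributions cancel in pairs, leaving exactly $\tfrac12(\eta(X)+\xi(Y)) = \langle u,v\rangle$.

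The Courant bracket identity is the substantive part and where I expect the main obstacle. The strategy is to compute $[\varphi(u),\varphi(v)]_{H^\vee}$ using \eqref{courantliealg}/the Courant bracket formula on $E^\vee$, expand every term via the definition of $\varphi$, and match against $\varphi([u,v]_H)$ expanded the same way. The vector part on the $E^\vee$ side is $[p^\vee_*\hat X, p^\vee_*\hat Y] = p^\vee_*[\hat X,\hat Y]$, so one needs $[\hat X,\hat Y]$ to be the correct lift of $[X,Y]$ for $\varphi([u,v]_H)$ — this requires checking that $p^*(\iota_X d\eta - \iota_Y d\xi + \iota_X\iota_Y H) - F([\hat X,\hat Y])$ is basic, which is where the hypothesis $p^*H - p^{\vee*}H^\vee = dF$ enters decisively: expanding $d(F(\hat X,\hat Y))$ by Cartan's formula and using $dF = p^*H - p^{\vee*}H^\vee$ converts the $H$-twist on $E$ into the $H^\vee$-twist on $E^\vee$ plus Lie-derivative/contraction terms that reassemble into the remaining pieces of the Courant bracket. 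The bookkeeping is intricate because $\mathcal L$ and $\iota d$ terms must be tracked across $p^*$ and $p^{\vee*}$ simultaneously; the cleanest route is to reduce everything to forms and vector fields on the single manifold $E\times_M E^\vee$, use that $p^*\xi - F(\hat X)$ and its analogue are basic (so closed contractions simplify), and invoke naturality of the Courant bracket under the maps $p,p^\vee$ in the form: $p$ and $p^\vee$ intertwine the $H$- and $H^\vee$-twisted Courant brackets with the $(p^*H - p^{\vee*}H^\vee)$-twisted bracket on the correspondence space precisely because of the defining equation. Once that intertwining is set up, the theorem follows by checking $\varphi$ is essentially "push down by $p^\vee$ what was pulled up by $p$," an isomorphism on invariant sections by non-degeneracy of $F$ on the fibers.

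I would present the non-degeneracy/invertibility of $\varphi$ as an immediate corollary of the fact that $F$ restricted to the fibers $T\times T^\vee$ is non-degenerate: this is exactly what makes the lift $\hat X$ exist and be unique, and symmetrically gives the inverse construction from $E^\vee$ to $E$, so $\varphi$ is bijective with inverse of the same shape. In summary: the metric identity is a two-line pairing computation; the bijectivity is formal from non-degeneracy of $F$; and the Courant bracket identity is the heart, proved by Cartan-calculus expansion on the correspondence space with $dF = p^*H - p^{\vee*}H^\vee$ as the linchpin that exchanges the two twists.
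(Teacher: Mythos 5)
Your proposal is correct, and it follows essentially the same route the paper itself uses: the paper states Theorem \ref{fiso} without proof (citing \cite{CG}) but gives a complete proof of its infinitesimal analogue for dual admissible triples, where the pairing identity is exactly your two-line computation using antisymmetry of $F$, the vector part reduces to checking that $p^*(\iota_Xd\eta-\iota_Yd\xi+\iota_X\iota_YH)-F([\hat X,\hat Y],\cdot)$ annihilates the $p^\vee$-vertical directions via $dF=p^*H-p^{\vee*}H^\vee$ and admissibility, and the one-form part is the Cartan-calculus expansion on the correspondence space you describe. No gaps.
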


Using $\varphi$ one can transport invariant generalized complex structures between $T$-dual spaces:

\begin{cor}\label{coro}
Let $(E,H)$ and $(E^{\vee},H^\vee)$ be $T$-dual spaces. If $\J$ is an invariant generalized complex structure on $E$ then $$\tilde{\J}:=\varphi\circ \J \circ \varphi^{-1}$$ is an invariant generalized complex structure on $E^\vee$.
\end{cor}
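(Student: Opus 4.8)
The plan is to deduce all the requirements on $\tilde{\J}$ --- that it is a well-defined invariant endomorphism of $TE^\vee\oplus T^*E^\vee$, that it squares to $-1$, that it is orthogonal, and that its $i$-eigenbundle is Courant-involutive --- directly from the corresponding properties of $\J$, using only that $\varphi$ is an invertible morphism of Courant algebroids as provided by Theorem \ref{fiso}. First I would address well-definedness: an invariant generalized complex structure on $E$ is the same datum as a linear endomorphism of the module of invariant sections $(\TM)/T$ squaring to $-1$, preserving the pairing and with involutive $i$-eigenbundle; conjugating such an endomorphism by the isomorphism $\varphi\colon(\TM)/T\to(TE^\vee\oplus T^*E^\vee)/T^\vee$ yields a linear endomorphism of $(TE^\vee\oplus T^*E^\vee)/T^\vee$, hence an invariant bundle endomorphism $\tilde{\J}$ of $TE^\vee\oplus T^*E^\vee$. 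Here invertibility of $\varphi$, which comes from the non-degeneracy of $F$, is exactly what makes $\varphi^{-1}$ and the conjugation legitimate, and invariance of $\tilde{\J}$ is automatic from the construction.

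Next, $\tilde{\J}^2=\varphi\circ\J\circ\varphi^{-1}\circ\varphi\circ\J\circ\varphi^{-1}=\varphi\circ\J^2\circ\varphi^{-1}=\varphi\circ(-1)\circ\varphi^{-1}=-1$. For orthogonality, given invariant sections $u,v$ of $TE^\vee\oplus T^*E^\vee$ one computes $\langle\tilde{\J}u,\tilde{\J}v\rangle=\langle\varphi(\J\varphi^{-1}u),\varphi(\J\varphi^{-1}v)\rangle=\langle\J\varphi^{-1}u,\J\varphi^{-1}v\rangle=\langle\varphi^{-1}u,\varphi^{-1}v\rangle=\langle u,v\rangle$, where the second and last equalities use that $\varphi$ preserves the bilinear form (Theorem \ref{fiso}) and the third uses that $\J$ is orthogonal; since the pairing is non-degenerate and everything in sight is invariant, this identity forces $\tilde{\J}$ to be orthogonal as a bundle map.

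Finally, let $L\subset(\TM)\otimes\C$ be the $i$-eigenbundle of $\J$. For $s\in L$ one has $\tilde{\J}\varphi(s)=\varphi(\J s)=\varphi(i\,s)=i\,\varphi(s)$, and since $\varphi$ is bijective the $i$-eigenbundle $\tilde L$ of $\tilde{\J}$ is precisely $\varphi(L\otimes\C)$. Because $L$ is involutive for the $H$-twisted Courant bracket and, by Theorem \ref{fiso}, $\varphi$ intertwines $[\,\cdot\,,\cdot\,]_H$ with $[\,\cdot\,,\cdot\,]_{H^\vee}$, for any $\varphi(s),\varphi(t)\in\tilde L$ we get $[\varphi(s),\varphi(t)]_{H^\vee}=\varphi([s,t]_H)\in\varphi(L\otimes\C)=\tilde L$, so $\tilde L$ is involutive. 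This shows $\tilde{\J}$ is a generalized complex structure on $(E^\vee,H^\vee)$. I do not expect a genuine obstacle: all the real content has been absorbed into Theorem \ref{fiso}, and the only points deserving a line of care are that $\varphi$ is a priori only defined on invariant sections (so one should note that $\tilde{\J}$ descends to an honest endomorphism of the bundle) and that the twist on the target is $H^\vee$, which is exactly the bracket for which $\tilde L$ is required to be closed, so the matching is exact.
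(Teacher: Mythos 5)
Your proposal is correct and follows exactly the route the paper intends: the corollary is stated as an immediate consequence of Theorem \ref{fiso}, obtained by conjugating $\J$ with the Courant algebroid isomorphism $\varphi$ so that $\tilde\J^2=-1$, orthogonality, and involutivity of the $i$-eigenbundle all transport directly. The paper omits the verification entirely, so your write-up simply makes explicit the same argument, including the worthwhile remark that everything takes place at the level of invariant sections.
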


\section{Infinitesimal $T$-duality}

\subsection{Basics on Lie algebras}

In this section we introduce basic notions on Lie algebras and the concept of central extension we shall use in the sequel of the paper. 

\begin{defi}
Let $\ma$ be an abelian Lie algebra and let $(\mn,[\,,\,])$ be a Lie algebra. Suppose a Lie algebra $(\mgg,[\,,\,]_\mgg)$ fits into an exact sequence of Lie algebra homomorphisms
$$0\lra \ma\stackrel{i}\lra\mgg\stackrel{q}\lra \mn\lra 0. $$ We say that $\mgg$ is a {\em central extension of $\mn$ by $\ma$} if $[i(\ma),\mgg]_\mgg=0$, i.e. $[i(x),y]_\mgg=0$ for all $x\in\ma$ and $y\in \mgg$. 
\end{defi}

If $\mgg$ is a central extension of $\mn$ by $\ma$, there exists a linear map $\beta:\mn\lra \mgg$ with $q\circ \beta=\operatorname{id}_\mn$. Moreover, the following is a well defined map $\Psi:\mn\times \mn\lra \ma$:
\begin{equation}\label{bracketcentralex}\Psi(x,y)=i^{-1}([\beta(x),\beta(y)]-\beta([x,y])),\quad x,y\in\mn. \end{equation}
This is a skew-symmetric bilinear form satisfying
\begin{equation}\label{eq.closed}\Psi([x,y],z)+\Psi([y,z],x)+\Psi([z,x],y)=0,\end{equation}
that is, $\Psi$ is a closed 2-form when considered the trivial representation \linebreak $\rho:\mn\lra \mathfrak{gl}(\ma)$. To make reference to this cocycle, $\mgg$ will be denoted by $\mn_\Psi$. 

From now on, we identify $\mn$ and $\ma$ with the vector subspaces $\beta(\mn)$ and $i(\ma)$, respectively. The homomorphism $q:\mgg\lra \mn$ also identifies $\mn$ with $\mgg/\ma$. In this context, $\mgg = \mn\oplus\ma$,  $\ma$ is a central ideal of $\mgg$ and the Lie bracket in $\mgg$ is related to that of $\mn$ by
\begin{equation}[x+z,x'+z']_\mgg =[x,x'] +\Psi(x,x'), \quad x,x'\in \mn, z,z'\in\ma.\label{eq.bracketextension}\end{equation}

Conversely, given $\ma$ and $\mn$ Lie algebras, $\ma$ abelian and a closed 2-form $\Psi$ on $\mn$ with values in $\ma$, the vector space $\mgg=\mn\oplus\ma$ with the Lie bracket in \eqref{eq.bracketextension} is a central extension of $\mn$ by $\ma$. It is well known that the central extensions of $\mn$, $\mn_\Psi$ and $\mn_{\Psi'}$ are isomorphic if and only if $\Psi-\Psi'=d\psi$ for some linear map $\psi:\mn\lra\ma$. We refer to \cite{SM,Sc} for basic facts about central extensions and Lie algebras in general.

Given a skew-symmetric bilinear form $\Psi:\mn\times\mn\lra \ma$ and a  basis $\{x_1,\ldots,x_m\}$  of $\ma$, there exist $f_1,\ldots,f_m\in \Lambda^2\mn^*$
such that $$\Psi(x,y)=\sum_{k=1}^m -f_k(x,y) x_k.$$ In this case we denote  $\Psi$ as $(f_1,\ldots,f_m)$ (we do not make reference to the basis unless needed).
It is clear that $\Psi$ is closed (see Eq. \eqref{eq.closed}) if and only if each $f_i$ is closed.

For a basis $\{x_1,\ldots,x_m\}$ of a Lie algebra, we denote with upper indices $\{x^1,\ldots,x^m\}$ the dual basis.
\begin{lm}\label{central}
Let $\mgg$ be a Lie algebra having a nontrivial central ideal $\ma$ and let $\{x_1,\ldots,x_m\}$ be a basis of $\ma$. Then $\mgg$ is isomorphic to the  central extension of $\mgg/\ma$ by the closed 2-form $(dx^1,\ldots,dx^m)$. 
\end{lm}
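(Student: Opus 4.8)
The plan is to unwind the general central-extension construction recalled above in the case at hand and to identify the resulting cocycle with $(dx^1,\ldots,dx^m)$. Since $\ma$ is a nontrivial central ideal, $\mgg$ fits into an exact sequence $0\lra\ma\stackrel{i}\lra\mgg\stackrel{q}\lra\mn\lra 0$ with $\mn:=\mgg/\ma$ and $[i(\ma),\mgg]_\mgg=0$. First I would fix a vector space complement $\mathfrak{s}$ of $\ma$ in $\mgg$, with associated linear section $\beta:\mn\lra\mgg$ (so $q\circ\beta=\operatorname{id}_\mn$ and $\beta(\mn)=\mathfrak{s}$), and complete $\{x_1,\ldots,x_m\}$ to a basis of $\mgg$ by a basis of $\mathfrak{s}$; let $\{x^1,\ldots,x^m\}\subset\mgg^*$ be the corresponding part of the dual basis, so $x^k(x_j)=\delta_{kj}$ and $x^k|_{\mathfrak{s}}=0$. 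By the discussion preceding the lemma, this presents $\mgg$ as the central extension $\mn_\Psi$, where $\Psi(x,y)=[\beta(x),\beta(y)]-\beta([x,y])\in\ma$ (under the identification of $\ma$ with $i(\ma)$).

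The heart of the proof is a short computation. Writing $\Psi=(f_1,\ldots,f_m)$ in the basis $\{x_1,\ldots,x_m\}$, i.e. $f_k=-x^k\circ\Psi$, one obtains
$$f_k(x,y)=-x^k\big([\beta(x),\beta(y)]\big)+x^k\big(\beta([x,y])\big)=-x^k\big([\beta(x),\beta(y)]\big),$$
since $\beta([x,y])\in\mathfrak{s}=\ker x^k$. Because the Chevalley--Eilenberg differential on $\mgg^*$ is given by $(dx^k)(X,Y)=-x^k([X,Y])$, this says exactly $f_k(x,y)=(dx^k)(\beta(x),\beta(y))$.

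It then remains to verify that $dx^k\in\Lambda^2\mgg^*$ genuinely descends to a $2$-form on $\mn$, equal to $f_k$ under the identification $\mn\cong\mathfrak{s}$ — which is also what makes ``$(dx^1,\ldots,dx^m)$'' meaningful as a cocycle on $\mn$. Since $\ma$ is central, $(dx^k)(z,\cdot)=-x^k([z,\cdot])=0$ for $z\in\ma$, so $dx^k=q^*\bar f_k$ for a unique $\bar f_k\in\Lambda^2\mn^*$; restricting to $\mathfrak{s}$ and using $q|_{\mathfrak{s}}=\beta^{-1}$ gives $\bar f_k=f_k$, and $\bar f_k$ is closed because $q^*$ is injective on forms and $q^*(d_\mn\bar f_k)=d_\mgg(dx^k)=0$ (equivalently, closedness of $\Psi$ is \eqref{eq.closed}). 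Thus $\mgg\cong\mn_\Psi=\mn_{(dx^1,\ldots,dx^m)}$. Finally, passing to another complement $\mathfrak{s}'$ changes each $x^k$ by a functional vanishing on $\ma$, hence pulled back from $\mn$, so it changes $\Psi$ by $d\psi$ for a linear map $\psi:\mn\lra\ma$; by the isomorphism criterion recalled before the lemma, the isomorphism class of $\mn_{(dx^1,\ldots,dx^m)}$ is therefore independent of the choices. There is no serious obstacle here: the whole argument is bookkeeping of identifications, and the only point needing care is this last one — reconciling the two a priori distinct meanings of ``$dx^k$'' (the Chevalley--Eilenberg differential on $\mgg^*$ versus the induced cocycle on $\mn^*$) and confirming independence of the way the basis of $\ma$ is completed to a basis of $\mgg$.
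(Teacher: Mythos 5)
Your argument is correct and follows essentially the same route as the paper's proof: fix a complement of $\ma$, use the associated section $\beta$ to compute the extension cocycle $\Psi$, and identify it with $(dx^1,\ldots,dx^m)$ via $dx^k(x,y)=-x^k([x,y])$ together with the observation that $\iota_z dx^k=0$ for central $z$, so that $dx^k$ descends to $\mn$. Your closing verification that a different choice of complement only changes $\Psi$ by a coboundary is a small but worthwhile addition that the paper leaves implicit.
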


\begin{proof} Denote by $\mn$ the quotient Lie algebra $\mgg/\ma$. We identify $\Lambda^2\mn^*$ with the subspace $\{\alpha\in \Lambda^2\mgg^*:\iota_x\alpha=0\mbox{ for all }x\in\ma\}$, so that if $\alpha$ is inside that set, then it induces $\tilde\alpha$ where $\tilde\alpha(q(x),q(x))=\alpha(x,y)$ for any $x,y\in \mgg$. Analogous identification holds for $\Lambda^2\mn^*\otimes\ma$ as a subspace of $\Lambda^2\mgg^*\otimes\ma$.

Under this identification $d\eta\in\Lambda^2\mn^*$  for all $\eta\in \mgg^*$  since $\iota_xd\eta(y)=d\eta(x,y)=-\eta([x,y])=0$ for all $y\in \mgg$ and $x\in \ma$. In particular,  $dx^i\in\Lambda^2\mn^*$ for any basis $\{x_1,\ldots,x_m\}$ of $\ma$ and $(dx_1,\ldots,dx_m)$ is a closed 2-form in $\mn$ with values in $\ma$.

If $i:\ma\lra \mgg$ is the inclusion and $q:\mgg\lra\mn$ is the quotient map then the following is an exact sequence
$$0\lra \ma\stackrel{i}\lra\mgg\stackrel{q}\lra \mn\lra 0, $$ where $[i(\ma),\mgg]=0$, so $\mgg$ is a central extension of $\mn$ by $\ma$.
Fix a complement $\mv$ of $\ma$ in $\mgg$ so that $\mgg=\mv\oplus \ma$ and denote $\pr_\mv:\mgg\lra \mv$ and $\pr_\ma:\mgg\lra \ma$ the projections. Define $\beta:\mn\lra \mgg$ as $\beta(u)=x_u$ where $x_u\in \mv$ is the unique element such that $q(x_u)=u$, then $q\circ \beta=id_\mn$. The 2-form of this extension is $\Psi(u,v)=[\beta(u),\beta(v)]-\beta([u,v])$, $u,v\in \mn$. But $q([x_u,x_v])=[u,v]$, so
$$\Psi(u,v)=[x_u,x_v]-\pr_\mv[x_u,x_v]=\pr_\ma([x_u,x_v]).$$
We only have left to remark that $$(x,y)\in \mgg\times\mgg\mapsto\pr_\ma([x,y])=\sum_{k=1}^m -dx^k(x,y) x_k$$ is an element $\Lambda^2\mgg^*\otimes\ma$, lying inside  $\Lambda^2\mn^*\otimes\ma$.\end{proof}

\smallskip

The lower central series  $\{\rmc^j(\mgg)\}$ and the derived series  $\{\rmd^j(\mgg)\}$ of a Lie algebra $\mgg$ are defined for all $j\geq 0$ by
$$\rmc^0(\mgg)=\mg,\quad  \rmc^j(\mgg)=[\mg,\rmc^{j-1}(\mgg)],\;\; j\geq 1.
$$
$$\rmd^0(\mgg)=\mg,\quad  \rmd^j(\mgg)=[\rmd^{j-1}(\mgg),\rmd^{j-1}(\mgg)],\;\; j\geq 1,
$$
We notice that $\rmc^1(\mgg)=[\mg,\mg]=\rmd^1(\mgg)$ is the commutator of $\mg$ and $\rmd^j(\mgg)\subseteq \rmc^j(\mgg)$ for all $j\geq 0$. A Lie algebra $\mgg$ is $j$-step solvable if $\rmd^{j}(\mg)=0$ and $\rmd^{j-1}(\mg)\neq 0$ for some $j\geq 0$. A solvable Lie algebra is said to be $k$-step nilpotent if $\rmc^k(\mgg)=0$ while $\rmc^{k-1}(\mgg)\neq 0$. 

\begin{pro}\label{pro.solucentext}
Let $\mgg$ be the central extension of a Lie algebra $\mn$. Then $\mgg$ is solvable (resp. nilpotent) if and only if $\mn$ is solvable (resp. nilpotent).
\end{pro}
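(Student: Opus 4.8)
The plan is to use the description of a central extension $\mgg = \mn \oplus \ma$ with bracket given by \eqref{eq.bracketextension}, namely $[x+z,x'+z']_\mgg = [x,x'] + \Psi(x,x')$ with $\ma$ central, and to compare the lower central (resp. derived) series of $\mgg$ with those of $\mn$ directly. The key observation is that the quotient map $q:\mgg \lra \mn$ is a surjective Lie algebra homomorphism with kernel $\ma$ contained in the center, so it behaves well with respect to both series, while the ``error'' introduced by $\Psi$ always lands in $\ma$, which is killed at the very next bracketing step.

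First I would record the two inclusions coming from functoriality of these series: since $q$ is a surjective homomorphism, $q(\rmc^j(\mgg)) = \rmc^j(\mn)$ and $q(\rmd^j(\mgg)) = \rmd^j(\mn)$ for all $j \geq 0$, by an immediate induction (the base case is $q(\mgg)=\mn$, and $q([\mgg,\rmc^{j-1}(\mgg)]) = [\mn, q(\rmc^{j-1}(\mgg))]$, etc.). In particular, if $\mgg$ is solvable (nilpotent) then so is $\mn$, because $\rmd^j(\mgg)=0$ forces $\rmd^j(\mn) = q(\rmd^j(\mgg)) = 0$, and similarly for $\rmc^j$. This gives one direction essentially for free.

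For the converse I would argue that $\rmc^j(\mgg) \subseteq \rmc^j(\mn) \oplus \ma$ for all $j \geq 1$, viewing $\mn$ as the complement $\beta(\mn)$ inside $\mgg = \mn \oplus \ma$. The base case $j=1$ is $[\mgg,\mgg]_\mgg \subseteq [\mn,\mn] + \Psi(\mn,\mn) \subseteq \rmc^1(\mn) \oplus \ma$, directly from \eqref{eq.bracketextension}. For the inductive step, $\rmc^j(\mgg) = [\mgg, \rmc^{j-1}(\mgg)]_\mgg \subseteq [\mgg, \rmc^{j-1}(\mn) \oplus \ma]_\mgg = [\mgg, \rmc^{j-1}(\mn)]_\mgg$ since $\ma$ is central; and bracketing $x + z \in \mgg$ with $w \in \rmc^{j-1}(\mn) \subseteq \mn$ gives $[x,w] + \Psi(x,w) \in \rmc^j(\mn) \oplus \ma$. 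Hence if $\mn$ is $k$-step nilpotent, $\rmc^{k}(\mgg) \subseteq \rmc^{k}(\mn) \oplus \ma = \ma$, and then $\rmc^{k+1}(\mgg) = [\mgg, \rmc^k(\mgg)]_\mgg \subseteq [\mgg, \ma]_\mgg = 0$, so $\mgg$ is nilpotent. The same template handles the derived series: $\rmd^j(\mgg) \subseteq \rmd^j(\mn) \oplus \ma$ for $j \geq 1$, and if $\rmd^j(\mn) = 0$ then $\rmd^j(\mgg) \subseteq \ma$, whence $\rmd^{j+1}(\mgg) \subseteq [\ma,\ma]_\mgg = 0$.

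The only mildly delicate point — and the thing I would be careful to state cleanly rather than a genuine obstacle — is that the complement $\beta(\mn)$ is not a subalgebra in general, so ``$\rmc^j(\mgg) \subseteq \rmc^j(\mn)\oplus\ma$'' must be read with $\rmc^j(\mn)$ meaning the $j$-th term of the lower central series of the \emph{abstract} Lie algebra $\mn$, transported into $\mgg$ via $\beta$, and one uses $q$ to certify that the $\mn$-component of any iterated bracket in $\mgg$ really does lie in the corresponding term for $\mn$. Everything else is a routine induction using only that $\ma$ is central and that $\Psi$ takes values in $\ma$. I would present the solvable and nilpotent cases in parallel, since the arguments are identical modulo replacing $\rmc$ by $\rmd$.
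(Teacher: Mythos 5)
Your proposal is correct and follows essentially the same route as the paper: one direction via the quotient map, and the converse via an inductive inclusion of the form $\rmc^k(\mgg)\subseteq \rmc^k(\mn)+\ma$ (the paper states the slightly sharper $\rmc^k(\mgg)\subseteq \rmc^k(\mn)+\Psi(\mn,\rmc^{k-1}(\mn))$, but your coarser version suffices and yields the same ``at most one more step'' conclusion). Your closing remark about reading $\rmc^j(\mn)$ as the abstract series transported by $\beta$ is a fair point of care that the paper leaves implicit.
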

\begin{proof}
Since $\mn$ is a quotient of $\mgg$ by an ideal, it is clear that $\mn$ is solvable or nilpotent if $\mgg$ is so.
For the converse, use the following inclusions for $k\geq 1$
$$\rmc^k(\mgg)\subseteq \rmc^k(\mn)+\Psi(\mn,\rmc^{k-1}(\mn)),\quad  \rmd^k(\mgg)\subseteq \rmd^k(\mn)+\Psi(\rmd^{k-1}(\mn),\rmd^{k-1}(\mn)).$$
These can be proved by a standard induction procedure.
\end{proof}
Notice that the steps of nilpotency or solvability of $\mgg$ are at most one more than that of $\mn$.

A Lie algebra is semisimple if its Killing form is nondegenerate. In particular, it coincides with its commutator and has no nontrivial abelian ideals. From these facts, it is clear that the central extension of a semisimple Lie algebra is never semisimple.

\subsection{Dual Lie algebras}
In this section we work with pairs of Lie algebras $\mgg$ and $\mgg^\vee$ that are isomorphic, up to a quotient by abelian ideals. That is, there exist abelian ideals  $\ma$ and $\ma^\vee$ in $\mgg$ and $\mgg^\vee$ such that $\mgg/ \ma \simeq\mgg^\vee/\ma^\vee$. In this case we denote $\mn$ the quotient Lie algebra and $q:\mgg\lra \mn$  and $q^\vee:\mgg^\vee\lra \mn$ the quotient maps.

The subspace $\mc$ of $\mgg\oplus\mgg^\vee$
$$\mc=\{(x,y)\in \mgg\oplus\mgg^\vee: q(x)=q^\vee(y)\} $$ is a Lie subalgebra and the following diagram is commutative
$$\xymatrix{
    &\mc \ar[ld]_{p}  \ar[rd]^{p^\vee}& \\
\mgg\ar[dr]_{q}&  & \mgg^\vee\ar[ld]^{q^\vee}.\\
               & \mn  & }
               $$
Here $p$ and $p^\vee$ are the projections over the first and second component, respectively. The Lie subalgebras $\mk=\{(x,0)\in\mc:x\in \ma\}$ and $\mk^\vee=\{(0,y)\in\mc:y\in \ma^\vee\}$ are also abelian ideals of $\mc$. In particular $\mc/\mk^\vee\simeq \mgg$ and $\mc/\mk\simeq \mgg^\vee$. As a vector space, $\mc$ is isomorphic to $\mn\oplus\ma \oplus \ma^\vee$.

A 2-form $F\in \Lambda^2\mc^*$ is said to be {\em non-degenerate in the fibers} if for all $x\in \mk$, there exists some $y\in \mk^\vee$ such that $F(x,y)\neq 0$. Such an $F$ exists if and only if $\dim \ma=\dim\ma^\vee$.

Assume $F$ is a non-degenerate 2-form in $\mc$ and let $x\in \mgg$ and $\xi \in \mgg^*$. Choose $y_0\in \mgg^\vee$ such that $q(x)=q^\vee(y_0)$, then ${p^{-1}(x)=\{(x,y_0+z): z\in \ma^\vee\}}$. There exists a unique $z_0\in \ma^\vee$ such that $\left(p^*\xi-F((x,y_0),\cdot) \right)|_{\mk}=F((0,z_0),\cdot)|_{\mk}$. Denote $u_x=(x,y_0+z_0)\in \mc$, then we have that $p^*\xi-F(u_x,\cdot) $ annihilates on $\mk$ so it is the pullback of a 1-form in $\mgg^\vee$.  Notice that $u_x$ does not depend on the choice of $y_0$.
 We shall define $\sigma_{\xi}\in\mgg^{\vee*}$ such that 
\begin{equation}\label{sigmadef}
p^*\xi-F(u_x,\cdot) =p^{\vee*}\sigma_\xi.
\end{equation}

\smallskip

The duality of Lie algebras we introduce below, corresponds to an infinitesimal version of the $T$-duality of principal torus bundles introduced in the previous section.

Let $\mgg$ be a Lie algebra together with a closed 3-form $H$. Let $\ma$ be an abelian ideal of  $\mgg$, we say that the triple $(\mgg,\ma,H)$ is admissible if $H(x,y,\cdot)=0$ for all $x,y\in\ma$. Notice that when $\dim \ma=1$ then any closed 3-form gives an admissible triple.  

\begin{defi} Two  admissible triples $(\mgg,\ma,H)$ and $(\mgg^\vee,\ma^\vee,H^\vee)$ are said to be {\em dual} if $\mgg/ \ma \simeq\mn\simeq \mgg^\vee/\ma^\vee$ and there exist a 2-form $F$ in $\mc$ which is non-degenerate in the fibers such that $p^*H-p^{\vee*}H^\vee=dF$.
\end{defi}

\begin{ex}
Let $\mg$ be the $n$ dimensional abelian Lie algebra and $\ma$ any $m$ dimensional proper subspace.  Therefore $(\mg,\ma,H)$ is dual to itself if and only if $H$ is a basic form, that is, it is a pullback from a form on $\mg/\ma$. Notice that for any $F \in \Lambda^2 \mc^*$ we have $dF=0$.
\end{ex}

In some cases we will say that $\mgg$ and $\mgg^\vee$ are dual meaning that there exist $H,H^\vee, \ma,\ma^\vee$ such that $(\mgg,\ma,H)$ and $(\mgg^\vee,\ma^\vee,H^\vee)$ are dual admissible pairs. We prove existence and uniqueness of dual triples.

\begin{teo}\label{teo.infdual} Let $(\mgg,\ma,H)$ be an admissible triple with $\ma$ a central ideal and let $\{x_1,\ldots,x_m\}$ be a basis  of  $\ma$. Let $\ma^\vee=\R^m$ and define  
\begin{itemize}
\item $\Psi^\vee:\mn\times\mn\lra \ma^\vee$ given by $\Psi^\vee=(\iota_{x_1}H,\ldots,\iota_{x_m}H)$, 
\item $\mgg^\vee=(\mgg/\ma)_{\Psi^\vee}$ and
\item $H^\vee=\sum_{k=1}^m z^k\wedge dx^k+\delta$ where $\{z_1,\ldots,z_m\}$ is a basis of $\ma^\vee$ and $\delta$ is the basic component of $H$.
\end{itemize}
Then $(\mgg^\vee,\ma^\vee,H^\vee)$ is an admissible triple and is dual to $(\mgg,\ma,H)$. 

Conversely, if $(\mgg^\vee,\ma^\vee,H^\vee)$ is dual to $(\mgg,\ma,H)$, then there exist a basis $\{x_1,\ldots,x_m\}$ of  $\ma$ and a basis $\{z_1,\ldots,z_m\}$ of $\ma^\vee$ such that the formulas above hold.
\end{teo}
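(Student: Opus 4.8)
The plan is to verify directly that the triple $(\mgg^\vee,\ma^\vee,H^\vee)$ defined in the statement is admissible and dual to $(\mgg,\ma,H)$, by exhibiting an explicit non-degenerate $2$-form $F$ on the correspondence algebra $\mc$, and then to prove the converse by reverse-engineering the bases from an arbitrary dual triple. For the first part, I would first record the shape of everything in a fixed basis. By Lemma~\ref{central} we may assume $\mgg=\mn_\Psi$ with $\Psi=(dx^1,\dots,dx^m)$, i.e. the structure of $\mgg$ is encoded by the closed $2$-forms $dx^k\in\Lambda^2\mn^*$. Writing $H$ according to the splitting $\mgg^*=\mn^*\oplus\ma^*$ and using admissibility ($\iota_{x_i}\iota_{x_j}H=0$), one gets $H=\sum_k x^k\wedge\beta_k+\delta$ with $\beta_k=\iota_{x_k}H\in\Lambda^2\mn^*$ and $\delta$ the basic component. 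Closedness of $H$ then translates into the identities $d\beta_k=0$ and $d\delta=\sum_k dx^k\wedge\beta_k$ (up to sign), which is exactly what is needed to see that $\Psi^\vee=(\beta_1,\dots,\beta_m)$ is a closed $2$-form on $\mn$, so that $\mgg^\vee=(\mgg/\ma)_{\Psi^\vee}$ is a well-defined Lie algebra, and that $H^\vee=\sum_k z^k\wedge dx^k+\delta$ is closed in $\mgg^\vee$ (here $dx^k$ is now read as an element of $\Lambda^2\mn^*\subseteq\Lambda^2(\mgg^\vee)^*$ and $dz^k=\beta_k$ in $\mgg^\vee$). Admissibility of $(\mgg^\vee,\ma^\vee,H^\vee)$ is immediate since $H^\vee$ has no $z^i\wedge z^j\wedge(\cdot)$ term.

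The core of the first part is the duality. As a vector space $\mc\cong\mn\oplus\ma\oplus\ma^\vee$, with $p,p^\vee$ the obvious projections; on $\mc$ we have the pullbacks $p^*x^k$ and $p^{\vee*}z^k$, and I would set
\[
F=\sum_{k=1}^m p^*x^k\wedge p^{\vee*}z^k .
\]
This $F$ is non-degenerate in the fibers because $\{x_k\}$ and $\{z_k\}$ are dual-paired by construction: $F$ restricted to $\mk\times\mk^\vee$ is the canonical pairing $\R^m\times\R^m\to\R$. It remains to compute $dF$ on $\mc$. Using $d(p^*x^k)=p^*(dx^k)$ and $d(p^{\vee*}z^k)=p^{\vee*}(dz^k)=p^{\vee*}\beta_k$, together with the fact that $p^*\beta_k=p^{\vee*}\beta_k$ on $\mc$ (both equal the pullback of $\beta_k\in\Lambda^2\mn^*$ along $q\circ p=q^\vee\circ p^\vee$), one gets
\[
dF=\sum_k p^*(dx^k)\wedge p^{\vee*}z^k-\sum_k p^*x^k\wedge p^{\vee*}\beta_k .
\]
On the other hand $p^*H=\sum_k p^*x^k\wedge p^*\beta_k+p^*\delta$ and $p^{\vee*}H^\vee=\sum_k p^{\vee*}z^k\wedge p^{\vee*}(dx^k)+p^{\vee*}\delta$; since $p^*\delta=p^{\vee*}\delta$ and $p^*\beta_k=p^{\vee*}\beta_k$ and $p^*(dx^k)=p^{\vee*}(dx^k)$ on $\mc$, the difference $p^*H-p^{\vee*}H^\vee$ collapses to exactly the expression for $dF$ above (modulo a uniform sign, which I would fix by the orientation convention $z^k\wedge dx^k$ versus $dx^k\wedge z^k$). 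This establishes that $(\mgg,\ma,H)$ and $(\mgg^\vee,\ma^\vee,H^\vee)$ are dual.

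For the converse, suppose $(\mgg^\vee,\ma^\vee,H^\vee)$ is any dual triple, with non-degenerate $F\in\Lambda^2\mc^*$ witnessing $p^*H-p^{\vee*}H^\vee=dF$. Non-degeneracy in the fibers gives $\dim\ma=\dim\ma^\vee=:m$ and a distinguished isomorphism $\ma^\vee\cong\ma^*$; choosing any basis $\{x_1,\dots,x_m\}$ of $\ma$ and letting $\{z_1,\dots,z_m\}\subset\ma^\vee$ be the basis with $F(x_i,z_j)=\delta_{ij}$ pins down the two bases. One then decomposes $F=\sum p^*x^k\wedge p^{\vee*}z^k+F_0$ where $F_0$ has no $\mk$-$\mk^\vee$ component, and absorbs $F_0$: modifying $F$ by $dF_0$ changes $H^\vee$ only within its cohomology class and, more to the point, changes it by an exact term pulled back from $\mn$, which can be reabsorbed, so without loss of generality $F=\sum p^*x^k\wedge p^{\vee*}z^k$. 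Now reading off the $\mn^*\wedge\ma^{\vee*}\wedge\ma^{\vee*}$-, $\ma^{\vee*}\wedge(\cdots)$- and basic components of the identity $p^*H-p^{\vee*}H^\vee=dF$ forces, exactly as in the forward computation, $dz^k=\iota_{x_k}H$ in $\mgg^\vee$ (hence $\Psi^\vee=(\iota_{x_1}H,\dots,\iota_{x_m}H)$ and $\mgg^\vee=(\mgg/\ma)_{\Psi^\vee}$ up to the isomorphism class of central extensions) and $H^\vee=\sum_k z^k\wedge dx^k+\delta$ with $\delta$ the basic part of $H$. I expect the main obstacle to be the bookkeeping in this converse step: controlling the ``off-diagonal'' part $F_0$ of $F$ and checking that the freedom it represents is precisely the freedom in the choice of cocycle representative (the $\Psi\sim\Psi+d\psi$ ambiguity) and in the isomorphism $\mgg/\ma\simeq\mn\simeq\mgg^\vee/\ma^\vee$, so that the normalization $F=\sum p^*x^k\wedge p^{\vee*}z^k$ can indeed be achieved without loss of generality; the degree-counting identities themselves are then routine.
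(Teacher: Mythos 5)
Your forward direction is essentially the paper's proof: the same decomposition $H=\sum_k \iota_{x_k}H\wedge x^k+\delta$, the same two identities $d(\iota_{x_k}H)=0$ and $d\delta=-\sum_k \iota_{x_k}H\wedge dx^k$, and the same fiberwise pairing $F=\pm\sum_k p^{\vee*}z^k\wedge p^*x^k$ (your sign is the opposite of the paper's, but you flag that). The only real variation is that you extract $d(\iota_{x_k}H)=0$ by isolating components of $dH=0$, while the paper passes to the group and uses $\mathcal L_{x_k}H=0$ plus Cartan's formula; both amount to the identity $\iota_{x_k}d+d\iota_{x_k}=\mathcal L_{x_k}=0$ for central $x_k$ and are fine.

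The converse is where your proposal has a genuine gap, precisely at the step you flagged. You write $F=\sum_k p^*x^k\wedge p^{\vee*}z^k+F_0$ and propose to ``absorb'' $F_0$, arguing this only moves $H^\vee$ within its cohomology class. But in the converse the triple $(\mgg^\vee,\ma^\vee,H^\vee)$ is \emph{given} and may not be altered: replacing $F$ by $F-F_0$ changes $dF$ by $-dF_0$, and $dF_0$ is in general not a pullback along $p^\vee$ (its contraction with $\mk$ need not vanish, since $\iota_{\tilde x}dF_0=-d\iota_{\tilde x}F_0$ for $\tilde x\in\mk$), so the modified $2$-form does not witness a duality between the same two triples and the ``without loss of generality'' is not available. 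The paper never normalizes $F$; instead it contracts the fixed identity $p^*H-p^{\vee*}H^\vee=dF$ with the central fiber vectors $\tilde x_k=(x_k,0)\in\mk$: centrality gives $\iota_{\tilde x_k}dF=-d\iota_{\tilde x_k}F$, while $p^\vee\tilde x_k=0$ kills the $H^\vee$ term, so $p^*\iota_{x_k}H=-d\iota_{\tilde x_k}F$. Choosing $\tilde x_k$ so that $\iota_{\tilde x_k}F$ is exactly $-p^{\vee*}z^k$ then yields $\iota_{x_k}H=dz^k$, i.e.\ the formula for $\Psi^\vee$, with $F$ untouched; the control of the off-diagonal part of $F$ is concentrated in that one choice of $\tilde x_k$, not in a modification of $F$. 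If you keep your ordering (fix $\{x_i\}$, then define $\{z_j\}$ by the fiberwise pairing), you should run this contraction argument and explicitly account for the contribution $d\iota_{\tilde x_k}F_0$; as written, your reduction to the ``diagonal'' $F$ does not go through.
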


\begin{proof}

For each $k=1,\ldots,m$ define $\Psi_k^\vee=\iota_{x_k}H$. The condition $H(x,x',\cdot)=0$ for all $x,x'\in \ma$  implies $\iota_{x}\Psi_k^\vee=0$ for all $x\in \ma$ and $k=1, \ldots, m$. Define $\delta=H-\sum_{k=1}^m\Psi_k^\vee\wedge x^k$, then $\iota_{x}\delta=0$ for all $x\in \ma$.

Let $G$ be a connected Lie group with Lie algebra $\mg$. Then $x_k$ (resp. $H$) defines a left-invariant vector field (resp. 3-form) on $G$. Since $x_k$ is in the center of $\mg$, left and right translations by $\exp t X$ coincide so the Lie derivative of $H$ along $x_k$ is zero, that is, $\mathcal L_{x_k}H=0$. Cartan's magic formula and $dH=0$ lead us to $d\Psi_k^\vee=d\iota_{x_k}H=0$. 

Because of the definition of $\delta$, $d\delta=dH-\sum_{k=1}^m d\Psi_k^\vee\wedge x^k -\sum_{k=1}^m \Psi_k^\vee\wedge dx^k$. But $dH=0=d\Psi_k ^\vee$ so we conclude
\begin{eqnarray}
0&=&d\Psi_k^\vee,\quad k=1,\ldots,m\label{eq.psik}\\
d\delta&=&-\sum_{k=1}^m \Psi_k^\vee\wedge dx^k.\label{eq.ddelta}
\end{eqnarray}

The fact that $\iota_{x}\Psi_k^\vee=\iota_{x}\delta=0$ for any  $x\in\ma$ implies that $\Psi_k^\vee$ and $\delta$ can be defined as forms in $\mn$. That is, the values of these 2- and 3- forms is constant along the equivalence classes in $\mg/\ma$. 

We mantain the same notation for these forms induced in $\mn$. Equation \eqref{eq.psik} implies that $\Psi^\vee=(\iota_{x_1}H,\ldots,\iota_{x_m}H)$ is closed in $\mn$ so one can consider the central extension of $\mn$  by $\Psi^\vee$, which we shall denote $\mgg^\vee$.

The first equation implies that $\Psi^\vee:\mn\times\mn\lra \R^m$ defined by components as $\Psi^\vee=(\iota_{x_1}H,\ldots,\iota_{x_m}H)$ is closed. So one can consider the central extension of $\mn$  by $\Psi^\vee$, which we shall denote $\mgg^\vee$. The central ideal appearing in this central extension is $\ma^\vee=\R^m$; denote $\{z_1,\cdots,z_m\}$ the basis of $\ma^\vee$ such that $dz^k=\Psi_k^\vee$.

Set $H^\vee=\sum_{k=1}^m z^k\wedge  dx^k+\delta$. Then
$dH^\vee=\sum_{k=1}^m dz^k\wedge dx^k+d\delta=\sum_{k=1}^m \Psi_k^\vee\wedge dx^k+d\delta=0$ in virtue of \eqref{eq.ddelta}, so $H^\vee$ is closed. By definition $H^\vee(z_i,z_j,\cdot)=0$ so the triple $(\mgg^\vee,\ma^\vee,H^\vee)$ is an admissible triple.

Notice  that  $dx^k$, $dz^k$ and $\delta$ are 2-forms in $\mn$ so their pullbacks by $p$ and $p^\vee$ coincide. In the correspondence space $\mc$ consider the 2-form $F=\sum_{k=1}^m p^{\vee*}z^k\wedge p^*x^k $, which is non-degenerate in the fibers and it satisfies
\begin{eqnarray*}
p^*H-p^{\vee*}H^\vee&=&p^*\left( \sum_{k=1}^m dz^k\wedge x^k +\delta\right)- p^{\vee*}\left(\sum_{k=1}^m z^k\wedge  dx^k+\delta\right)\\
&=& \sum_{k=1}^m p^*dz^k\wedge p^*x^k - \sum_{k=1}^m p^{\vee*}z^k\wedge  p^{\vee*}dx^k\\
&=& \sum_{k=1}^m p^{\vee*}dz^k\wedge p^*x^k - \sum_{k=1}^m p^{\vee*}z^k\wedge  p^{*}dx^k\\
&=& d\left(\sum_{k=1}^m p^{\vee*}z^k\wedge p^*x^k \right)=dF.
\end{eqnarray*}
Therefore the triples are indeed dual triples.
\smallskip

Now we prove the converse. Assume $(\mgg^\vee,\ma^\vee,H^\vee)$ is dual of $(\mgg,\ma,H)$, then $\mgg^\vee$ has a central ideal $\ma^\vee$ such that $\mgg^\vee/\ma^\vee\simeq \mn\simeq \mgg/\ma$ and $\mgg^\vee$ is the central extension of $ \mn$ by a closed 2-form $\Psi^\vee$.

The 2-form $F\in \Lambda^2\mc^*$ given by the duality restricts to a non-degenerate form $F:\mk\times \mk^\vee\lra \R$. Let $\{z_1,\ldots,z_m\}$ a basis of $\ma^\vee$ and $\Psi^\vee=(dz^1,\ldots,dz^m)$, then $\mgg^\vee=\mn_{\Psi^\vee}$ by Lemma \ref{central}. 

For each $k=1,\ldots,m$ denote $\tilde z_k=(0,z_k)\in\mk^\vee$ and notice that $d\tilde z^k=p^{\vee*}dz^k$, then there exists $\tilde x_k=(x_k,0)\in\mk$ such that $F(\cdot,\tilde x_k)=\tilde z^k$. Clearly, $\{x_1,\ldots,x_m\}$ is a basis of $\ma$. Moreover
$\iota_{\tilde x_k}dF=\iota_{\tilde x_k}p^*H=p^*\iota_{x_k}H$ but at the same time $\iota_{\tilde x_k}dF=-d\iota_{\tilde x_k}F$ because $\tilde x_k$ is central, therefore $p^{\vee*}dz^k=p^{*}\iota_{x_k} H$. The 3-form $H$ being admissible for $\ma$ implies that $\iota_{x_k}H$ is basic and so is $dz^k$, so the previous equality implies $\iota_{x_k}H=dz^k$ and hence $\Psi^\vee=(\iota_{x_1}H,\ldots,\iota_{x_m}H)$. Following similar steps as in the first part of the proof we obtain $\Psi^\vee=(\iota_{x_1}H,\ldots,\iota_{x_m}H)$.\end{proof}

Duality is closed in the family of solvable and nilpotent Lie algebras.
\begin{cor} If $\mgg$ and $\mgg^\vee$ are dual then $\mgg$ is solvable if and only if $\mgg^\vee$ is solvable. Moreover, $\mgg$ is nilpotent if and only if $\mgg^\vee$ is so.
\end{cor}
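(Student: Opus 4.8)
The plan is to reduce the corollary to Proposition~\ref{pro.solucentext}. The key observation is that duality, as constructed in Theorem~\ref{teo.infdual}, is realized through central extensions: if $\mgg$ and $\mgg^\vee$ are dual, then by the theorem there is a Lie algebra $\mn$ with $\mgg\simeq \mn_\Psi$ for $\Psi=(dx^1,\ldots,dx^m)$ (Lemma~\ref{central}) and $\mgg^\vee\simeq\mn_{\Psi^\vee}$ for $\Psi^\vee=(\iota_{x_1}H,\ldots,\iota_{x_m}H)$. So both $\mgg$ and $\mgg^\vee$ are central extensions of the \emph{same} Lie algebra $\mn$.

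The argument then runs as follows. First I would invoke Theorem~\ref{teo.infdual} to obtain the common quotient $\mn$ together with the presentations $\mgg=\mn_\Psi$ and $\mgg^\vee=\mn_{\Psi^\vee}$ as central extensions of $\mn$ by the abelian ideals $\ma$ and $\ma^\vee$ respectively. Next I would apply Proposition~\ref{pro.solucentext} twice: $\mgg$ is solvable (resp.\ nilpotent) if and only if $\mn$ is solvable (resp.\ nilpotent), and likewise $\mgg^\vee$ is solvable (resp.\ nilpotent) if and only if $\mn$ is. Chaining these two equivalences gives that $\mgg$ is solvable iff $\mgg^\vee$ is solvable, and the same for nilpotency, which is exactly the statement of the corollary.

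The only point that requires a little care is that Theorem~\ref{teo.infdual} is stated under the hypothesis that $\ma$ is a \emph{central} ideal, whereas the definition of duality only asks that $\ma$ and $\ma^\vee$ be abelian ideals. However, for the purposes of the solvable/nilpotent conclusion this causes no trouble: if one of $\mgg$, $\mgg^\vee$ is nilpotent then the relevant abelian ideal is automatically central (every ideal of a nilpotent Lie algebra meets the center, and one may pass to such a subideal, or simply note that an abelian ideal in a nilpotent algebra that is central works directly), and if one of them is merely solvable one argues with the solvable radical; alternatively, the quotient $\mn$ is the same regardless and Proposition~\ref{pro.solucentext} only uses the inclusions $\rmc^k(\mgg)\subseteq \rmc^k(\mn)+\Psi(\mn,\rmc^{k-1}(\mn))$ and $\rmd^k(\mgg)\subseteq \rmd^k(\mn)+\Psi(\rmd^{k-1}(\mn),\rmd^{k-1}(\mn))$, which hold for any central extension. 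I expect this bookkeeping about centrality versus abelianness to be the main (minor) obstacle; the core of the proof is the purely formal chaining of the two instances of Proposition~\ref{pro.solucentext} through the common quotient $\mn$.
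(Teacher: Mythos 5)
Your proof is correct and is essentially the paper's own argument: the paper's proof consists of the single observation that $\mgg$ and $\mgg^\vee$ are central extensions of the same Lie algebra $\mn$, so the result follows from Proposition~\ref{pro.solucentext} applied twice. Your extra remarks on abelian versus central ideals address a point the paper silently glosses over, but they do not change the route.
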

\begin{proof}
The Lie algebras $\mgg$ and $\mgg^\vee$ are central extensions of the same Lie algebra $\mn$, so the result follows from Proposition \ref{pro.solucentext}.
\end{proof}

\begin{cor}\label{toro2step} For a Lie algebra $\mgg$ and a central ideal $\ma$, the triple  $(\mgg,\ma, H=0)$ is admissible and the dual $\mgg^\vee$ satisfies $\mgg^\vee\simeq \mn\oplus \R^m$ as a Lie algebra. In particular if $\mgg$ is 2-step nilpotent and $\ma$ contains the commutator of $\mgg$ then $\mgg^\vee$ is an abelian Lie algebra and $H^\vee\neq 0$ 
\end{cor}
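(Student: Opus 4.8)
The plan is to read everything off from Theorem \ref{teo.infdual} by specializing to $H=0$. First, admissibility of $(\mgg,\ma,H=0)$ is immediate, since the requirement $H(x,y,\cdot)=0$ for $x,y\in\ma$ holds vacuously. Thus Theorem \ref{teo.infdual} applies with any basis $\{x_1,\ldots,x_m\}$ of the central ideal $\ma$ and yields a dual triple $(\mgg^\vee,\ma^\vee,H^\vee)$ with $\ma^\vee=\R^m$.

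Next I would evaluate the three ingredients of Theorem \ref{teo.infdual} in this situation. Since $\iota_{x_k}H=0$ for all $k$, the cocycle is $\Psi^\vee=(\iota_{x_1}H,\ldots,\iota_{x_m}H)=0$, so $\mgg^\vee=(\mgg/\ma)_{\Psi^\vee}=\mn_0$ is the trivial central extension; by the bracket formula \eqref{eq.bracketextension} with $\Psi=0$ this is precisely the direct sum of Lie algebras $\mn\oplus\R^m$. Similarly the basic component $\delta$ of $H=0$ vanishes, whence $H^\vee=\sum_{k=1}^m z^k\wedge dx^k$, where $\{z^1,\ldots,z^m\}$ is the dual basis of a basis $\{z_1,\ldots,z_m\}$ of $\ma^\vee$, and each $dx^k\in\Lambda^2\mn^*$ is the Chevalley--Eilenberg differential of $x^k$, which descends to $\mn$ because $\ma$ is central (as in Lemma \ref{central}).

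For the last assertion, assume $\mgg$ is $2$-step nilpotent with $[\mgg,\mgg]\subseteq\ma$. Then $\mn=\mgg/\ma$ is abelian, so $\mgg^\vee\simeq\mn\oplus\R^m$ is abelian as well. To see $H^\vee\neq0$, use that $2$-step nilpotency gives $[\mgg,\mgg]\neq0$: choose $y,z\in\mgg$ with $[y,z]\neq0$; since $[y,z]\in\ma$, writing $[y,z]=\sum_k c_k x_k$ there is an index $k_0$ with $c_{k_0}\neq0$, and then $dx^{k_0}(y,z)=-x^{k_0}([y,z])=-c_{k_0}\neq0$, so $dx^{k_0}\neq0$ as a form on $\mn$. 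Now $H^\vee=\sum_k z^k\wedge dx^k$ lies in $\Lambda^2\mn^*\wedge\ma^{\vee*}\subseteq\Lambda^3\mgg^{\vee*}$, and it can vanish only if every $dx^k=0$, since the $z^k$ are linearly independent in $\ma^{\vee*}$ while the $dx^k$ carry no factor from $\ma^{\vee*}$; as some $dx^{k_0}\neq0$, we conclude $H^\vee\neq0$.

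The only step requiring any care — and the closest thing to an obstacle — is this last one: one must invoke the convention that $2$-step nilpotent means $[\mgg,\mgg]\neq0$ (equivalently, $\mgg$ is non-abelian) and combine it with the linear independence of the $z^k$ to rule out $H^\vee=0$; were $\mgg$ abelian, all $dx^k$ would vanish and $H^\vee$ would be zero.
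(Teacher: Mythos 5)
Your proposal is correct and follows exactly the route the paper intends: the corollary is an immediate specialization of Theorem \ref{teo.infdual} to $H=0$, giving $\Psi^\vee=0$ (hence the trivial extension $\mn\oplus\R^m$) and $H^\vee=\sum_k z^k\wedge dx^k$, which is nonzero because $2$-step nilpotency forces some $dx^{k_0}\neq 0$ and the $z^k$ are independent. The paper offers no written proof for this corollary, and your argument supplies precisely the expected one, with the non-vanishing of $H^\vee$ handled carefully.
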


\begin{ex}
One can specify a Lie algebra $\mg$ by listing the derivatives of a basis $\{e^1,\dots,e^n\}$ of $\mg^*$ as an $n$-uple of $2$-forms $(de^k=\sum c_{ij}^ke^i\wedge e^j)_{k=1}^n$. To simplify the notation we write $e^{ij}$ for the $2$-form $e^i\wedge e^j$. This is the Malcev's notation for nilpotent Lie algebras. For example, the $6$-uple $(0,0,0,e^{12},e^{13},e^{14})$ is the Lie algebra with dual generated by $e^1,\dots,e^6$ such that $de^1=de^2=de^3=0$, $de^4=e^1 \wedge e^2$, $de^5=e^1 \wedge e^3$ and $de^6=e^1 \wedge e^4$. This notation is very useful to explicit the dual of a given admissible triple.

Let $\mg=(0,0,0,e^{12},e^{13},e^{14})$, $\ma=\langle e_5,e_6\rangle$ and $H=e^{123}+e^{135}+e^{246} \in \Lambda^3\mg^*$. The triple $(\mg,\ma,H)$ is admissible. In this case $\Psi^\vee=(e^{13},e^{24})$,  ${\mg/\ma = (0,0,0,e^{12})}$ and $\delta= e^{123} $. Then $\mg^\vee=(0,0,0,e^{12},e^{13},e^{24})$ and $H^\vee=e^{123}+e^{135}+e^{146}$.
\begin{center}
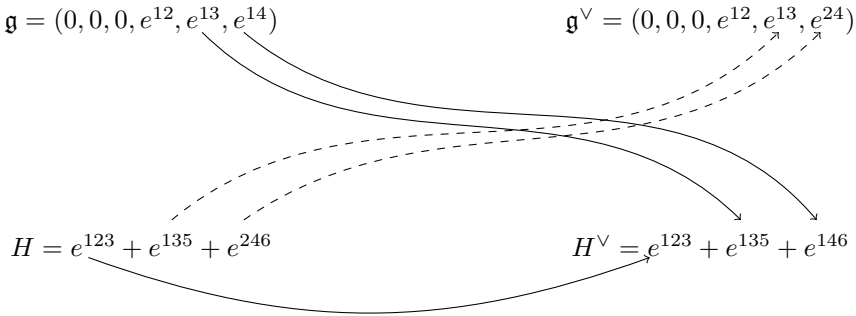
\begin{figure}[H]
\begin{tikzpicture}
\node at (1,0) {$H=e^{123}+e^{135}+e^{246}$};
\node at (8.5,0) {$H^\vee=e^{123}+e^{135}+e^{146}$};
\node at (1,3) {$\mg=(0,0,0,e^{12},e^{13},e^{14})$};
\node at (8.5,3) {$\mg^\vee=(0,0,0,e^{12},e^{13},e^{24})$};
\draw [->] (1.8,2.85) to [out=-45, in=135](8.9,0.35);
\draw [->] (2.35,2.85) to [out=-40,in=130] (9.9,0.35);
\draw [dashed, ->] (1.4,0.35) to [out=40,in=-135](9.4,2.85);
\draw [dashed, ->] (2.35,0.35) to [out=35,in=-135](9.95,2.85);
\draw [->] (0.3,-0.15) to [out=-20,in=-160](7.7,-0.15);

\end{tikzpicture}
\caption{The diagram shows how to construct the dual  of a given admissible triple.}
\end{figure}
\end{center}
\end{ex}

\begin{teo} Let $\mgg$, $\mgg^\vee$ be Lie algebras and let $H,H^\vee$ be closed 3-forms in  $\mgg$, $\mgg^\vee$, respectively. If there exist abelian ideals $\ma$, $\ma^\vee$ such that $(\mgg,\ma,H)$ and $(\mgg^\vee,\ma^\vee,H^\vee)$ are admissible dual triples, then there exists an isomorphism $\varphi:\mgg\oplus\mgg^*\lra \mgg^\vee\oplus\mgg^{\vee*}$ preserving the Courant bracket \eqref{courantliealg} and the canonical bilinear form \eqref{metriclie}.
\end{teo}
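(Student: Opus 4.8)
The plan is to mimic, at the Lie algebra level, the Cavalcanti--Gualtieri construction of the isomorphism $\varphi$ recalled in Section~\ref{BHM}, and then invoke Theorem~\ref{fiso} to transfer the structures. Concretely, I would work on the correspondence Lie algebra $\mc=\{(x,y)\in\mgg\oplus\mgg^\vee:q(x)=q^\vee(y)\}$ introduced above, with its projections $p:\mc\lra\mgg$, $p^\vee:\mc\lra\mgg^\vee$, the abelian ideals $\mk,\mk^\vee$, and the non-degenerate $2$-form $F\in\Lambda^2\mc^*$ furnished by duality, satisfying $p^*H-p^{\vee*}H^\vee=dF$. The map I would define is exactly the infinitesimal analogue of \eqref{fi}:
\begin{equation*}
\varphi(x+\xi)=p^\vee_*(u_x)+p^{\vee*}\sigma_\xi,\qquad x\in\mgg,\ \xi\in\mgg^*,
\end{equation*}
where $u_x\in\mc$ is the lift of $x$ normalized by the condition that $p^*\xi-F(u_x,\cdot)$ annihilate $\mk$, and $\sigma_\xi\in\mgg^{\vee*}$ is defined by \eqref{sigmadef}; both exist and are unique by the discussion preceding the theorem (non-degeneracy of $F$ in the fibers together with $\dim\ma=\dim\ma^\vee$). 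Note $p^\vee_*(u_x)$ is well defined in $\mgg^\vee$ since two lifts of $x$ differ by an element of $\mk^\vee=\ker p$.

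\textbf{Key steps.} First I would check that $\varphi$ is well defined and linear, and then that it is a bijection: since the construction is symmetric in $\mgg\leftrightarrow\mgg^\vee$ (swapping $F$ for $-F$ and $p\leftrightarrow p^\vee$), one obtains a candidate inverse $\varphi^\vee:\mgg^\vee\oplus\mgg^{\vee*}\lra\mgg\oplus\mgg^*$ the same way, and a short computation shows $\varphi^\vee\circ\varphi=\operatorname{id}$. Second, I would verify that $\varphi$ preserves the bilinear form \eqref{metriclie}; this is a purely linear-algebra check using the defining relation $p^*\xi-F(u_x,\cdot)=p^{\vee*}\sigma_\xi$ paired against $u_y$, and it reduces to $\tfrac12(\eta(x)+\xi(y))=\tfrac12(\sigma_\xi(p^\vee_*u_y)+\sigma_\eta(p^\vee_*u_x))$ after using $p^*\eta-F(u_y,\cdot)=p^{\vee*}\sigma_\eta$, antisymmetry of $F$, and $p_*u_x=x$. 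Third, and this is the substantive point, I would prove that $\varphi$ intertwines the Courant brackets \eqref{courantliealg}, i.e. $[\varphi(u),\varphi(v)]_{H^\vee}=\varphi([u,v]_H)$ for all $u,v\in\mgg\oplus\mgg^*$. Here I would argue that the Courant bracket \eqref{courantliealg} on $\mgg\oplus\mgg^*$ is precisely the restriction to left-invariant sections of the $H$-twisted Courant bracket on $TG\oplus T^*G$, where $G$ is the simply connected Lie group with Lie algebra $\mgg$ (and likewise $G^\vee$, $\mgg^\vee$); the maps $p,p^\vee,F,\sigma$ all have left-invariant counterparts on the correspondence space, which is a Lie group locally a torus-bundle tower, so the infinitesimal $\varphi$ is the restriction of the Cavalcanti--Gualtieri $\varphi$ of Theorem~\ref{fiso} to invariant sections. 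Invariance is preserved by that $\varphi$ (it is equivariant), hence the bracket identity descends. Alternatively, if one prefers a self-contained algebraic argument, one expands both sides of the bracket identity in coordinates adapted to $\mc=\mn\oplus\ma\oplus\ma^\vee$, using \eqref{eq.bracketextension} for the brackets of the two central extensions, the explicit $F=\sum_k p^{\vee*}z^k\wedge p^*x^k$ and $H^\vee=\sum_k z^k\wedge dx^k+\delta$ from Theorem~\ref{teo.infdual}, together with the cocycle identities \eqref{eq.psik}--\eqref{eq.ddelta}; the terms $\mathcal L_x\eta$, $\iota_y d\xi$ and $\iota_x\iota_y H$ then match up after a bookkeeping computation.

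\textbf{Main obstacle.} The routine parts (well-definedness, bijectivity, metric-preservation) are short; the real work is the bracket identity. The cleanest route is to deduce it from Theorem~\ref{fiso} by the invariance argument, but that requires one to justify carefully that the correspondence space at the group level is itself a (left-invariant) principal torus bundle tower to which the hypotheses of Theorem~\ref{fiso} apply, and that $F$, viewed as a left-invariant $2$-form, is indeed fiberwise non-degenerate there with $p^*H-p^{\vee*}H^\vee=dF$ globally — all of which follows because $\mgg,\mgg^\vee$ are central extensions of $\mn$ and $\ma,\ma^\vee$ are central, so the corresponding Lie groups are tower extensions by $\R^m$. I expect the write-up will therefore spend most of its effort setting up this group-level correspondence space and checking these compatibility conditions; once that is in place, Theorem~\ref{fiso} gives the Courant-bracket and bilinear-form statements for free, and restricting to left-invariant sections yields the claimed isomorphism $\varphi:\mgg\oplus\mgg^*\lra\mgg^\vee\oplus\mgg^{\vee*}$.
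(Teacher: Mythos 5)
Your map $\varphi$ and the normalization of the lift $u_x$ are exactly the paper's, and your verification of linearity, bijectivity and preservation of the bilinear form coincides with the computation given there. The divergence is in the bracket identity, which is indeed the substantive step. The paper does \emph{not} pass to the group level: it proves the identity by a direct computation entirely inside the correspondence Lie algebra $\mc$, in two stages. First it shows $u_{[x,y]}=[u_x,u_y]$ by checking that $p^*(i_xd\eta-i_yd\xi+i_xi_yH)-F([u_x,u_y],\cdot)$ annihilates $\mk$, using only the relation $dF=p^*H-p^{\vee*}H^\vee$, admissibility, and the fact that $\mk$ is central in $\mc$; this settles the vector part. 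Then it establishes the one-form part by pulling everything back along $p^{\vee}$ and using surjectivity of $p^\vee$, again with a short Cartan-calculus manipulation. Crucially, this argument works with an \emph{arbitrary} fiberwise non-degenerate $F$ satisfying the duality relation; it does not need the normal forms $F=\sum_k p^{\vee*}z^k\wedge p^*x^k$ and $H^\vee=\sum_k z^k\wedge dx^k+\delta$ from Theorem \ref{teo.infdual}, which your alternative coordinate computation would rely on.

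Your preferred route --- deducing the identity from Theorem \ref{fiso} by restriction to invariant sections --- has a genuine gap as stated. Theorem \ref{fiso} is formulated for principal bundles with \emph{compact} torus fibers over a common base, with $F$ invariant and non-degenerate on those fibers; the present theorem is for arbitrary real Lie algebras (the ideals $\ma$, $\ma^\vee$ are only assumed abelian, and no lattice or rationality hypothesis is available, so one cannot compactify the fibers), and at the simply connected group level the fibers are copies of $\R^m$, to which the cited theorem does not literally apply. One would have to either re-prove the Cavalcanti--Gualtieri theorem for $\R^m$-principal bundles and then check that the global $\varphi$ preserves left-invariance (an equivariance statement that is not automatic from $T$-invariance alone), or fall back on the self-contained algebraic computation --- which is precisely what the paper does, and more cleanly than the coordinate expansion you sketch, since it avoids choosing bases and the specific $F$ altogether.
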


\begin{proof} Let $x+\xi\in \mgg\oplus\mgg^*$. As discussed before (see Eq.\eqref{sigmadef}) nondegeneracy of $F$ implies that there exist  unique $u_x\in \mc$  and $\sigma_\xi\in \mgg^{\vee*}$ such that
\begin{equation}\label{eq.sigmaxi}
p\, u_x=x\;\mbox{ and }\; p^*\xi-F(u_x,\cdot) =p^*\sigma_\xi.
\end{equation}
Thus we can define $\varphi:\mgg\oplus\mgg^*\lra \mgg^\vee\oplus\mgg^{\vee*}$ as
$x+\xi\mapsto \varphi(x+\xi)= p^\vee u_x+\sigma_\xi$.  It is easy to check that $\varphi$ is a linear isomorphism, moreover for any $x,y\in \mgg$ and $\xi,\eta\in \mgg^*$ we have
\begin{eqnarray*}
\lela \varphi(x+\xi),\varphi(y+\eta)\rira &=&
\lela p^\vee u_x+\sigma_\xi,p^\vee u_y+\sigma_\eta\rira\\
&=& \frac{1}{2} (\sigma_\xi(p^\vee u_y)+\sigma_\eta(p^\vee u_x))\\
&=& \frac{1}{2} (p^*\xi(u_y)-F(u_x,u_y)+p^*\eta(u_x)-F(u_y,u_x))  \\
&=& \frac{1}{2} (\xi(y)+\eta(x)).
\end{eqnarray*}

In order to show that $\varphi$ behaves well with the Courant bracket, that is, for $x,y\in \mgg$ $\xi,\eta\in \mgg^*$
\begin{equation}\label{eq.phibracket}
\varphi([x+\xi,y+\eta]_H)=[\varphi(x+\xi),\varphi(y+\eta)]_{H^\vee},
\end{equation}
we analyze separately the vector and 1-form parts. From the definitions of the Courant bracket and $\varphi$, Eq. \eqref{eq.phibracket} holds if and only if 
\begin{eqnarray}
p^\vee u_{[x,y]}&=&[p^\vee u_x,p^\vee u_y] \mbox{ and }\label{eq.part1}\\
\alpha &=& i_{p^\vee u_x}d\sigma_\eta-i_{p^\vee u_y}d\sigma_\xi+i_{p^\vee u_x}i_{p^\vee u_y}H^\vee \label{eq.part2}
\end{eqnarray}
where $p^*(i_xd\eta-i_yd\xi+i_xi_yH)-F(u_{[x,y]},\cdot)=p^{\vee*}\alpha$.

It is clear that $p([u_x,u_y])=[x,y]$, moreover
for $(z,0)\in \mk$ we have
\begin{eqnarray*}
&& p^*(i_xd\eta-i_yd\xi+i_xi_yH)(z,0)-F([u_x,u_y],\cdot)(z,0)\\
&&\hspace{1.5cm}=-\eta([x,z])+\xi([y,z])+H(x,y,z)-F([u_x,u_y],(z,0))\\
&&\hspace{1.5cm}=p^*H(u_x,u_y,(z,0))-dF(u_x,u_y,(z,0))\\
&&\hspace{1.5cm}=H^\vee(p^\vee u_x,p^\vee u_y,p^\vee(z,0))\\
&&\hspace{1.5cm}= 0.
 \end{eqnarray*} Here we have used the fact that $\mk$ is in the center of $\mc$ and $dF=p^*H-p^{\vee *} H^\vee$. We conclude that $u_{[x,y]}=[u_x,u_y]$ and thus Eq. \eqref{eq.part1} holds.

We shall prove that $p^{\vee*}\alpha = p^{\vee*}\left(i_{p^\vee u_x}d\sigma_\eta-i_{p^\vee u_y}d\sigma_\xi+i_{p^\vee u_x}i_{p^\vee u_y}H^\vee\right)$. Since $p^{\vee}$ is surjective, Eq. \eqref{eq.part2} will hold.

Notice that $p^{\vee*}(i_{p^\vee u_x}d\sigma_\eta)=\iota_{u_x}(p^{\vee*}d\sigma_\eta)$ and analogous equality holds for the other forms involved, thus
\begin{eqnarray*}
&&p^{\vee*}\left(i_{p^\vee u_x}d\sigma_\eta-i_{p^\vee u_y}d\sigma_\xi+i_{p^\vee u_x}i_{p^\vee u_y}H^\vee\right)\\ 
&&\hspace{1.5cm}=i_{ u_x}dp^{\vee*}\sigma_\eta-i_{u_y}dp^{\vee*}\sigma_\xi+i_{u_x}i_{ u_y}p^{\vee*} H^\vee\\
&&\hspace{1.5cm}=i_{ u_x}d(p^*\eta-\iota_{u_y}F)-i_{u_y}d(p^*\xi-\iota_{u_x}F)+i_{u_x}i_{ u_y}p^{\vee*} H^\vee\\
&&\hspace{1.5cm}=p^*i_{x}d\eta-p^*i_{y}d\xi-
\iota_{u_x}d\iota_{u_y}F -\iota_{u_y}d\iota_{u_x}F
+i_{u_x}i_{ u_y}(p^*H-dF)\\
&&\hspace{1.5cm}=p^*i_{x}d\eta-p^*i_{y}d\xi+i_{u_x}i_{ u_y}p^*H-\iota_{[u_x,u_y]}F.\end{eqnarray*}\end{proof}

As in the case of global $T$-duality (see Theorem \ref{fiso}), we conclude that the map $\varphi$ is an isomorphism of the Courant algebroids structures on the Lie algebras, so we have a bijection between generalized complex structures on dual Lie algebras.

\begin{cor}\label{transfer}
Let $(\mgg, \ma,H)$ and $(\mgg^\vee,\ma^\vee,H^\vee)$ be dual triples. If $J$ is a generalized complex structure on $(\mgg,H)$ then $$\tilde{J}:=\varphi\circ J \circ \varphi^{-1}$$ is an invariant generalized complex structure on $(\mgg^\vee,H^\vee)$.
\end{cor}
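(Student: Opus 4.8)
The plan is to check directly that $\tilde J$ fulfils the three conditions in the definition of a generalized complex structure on $\mgg^\vee\oplus\mgg^{\vee*}$: that $\tilde J^2=-1$, that $\tilde J$ is orthogonal for the bilinear form \eqref{metriclie}, and that its $i$-eigenspace is involutive under the Courant bracket \eqref{courantliealg}. Everything will follow formally from the preceding theorem, which states that $\varphi$ is a linear isomorphism intertwining the bilinear forms and the Courant brackets on $\mgg\oplus\mgg^*$ and $\mgg^\vee\oplus\mgg^{\vee*}$; in particular $\varphi^{-1}$ has the same property.

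First I would record that $\tilde J^2=\varphi\circ J\circ\varphi^{-1}\circ\varphi\circ J\circ\varphi^{-1}=\varphi\circ J^2\circ\varphi^{-1}=-1$, so that $\tilde J$ is a complex structure on the real vector space $\mgg^\vee\oplus\mgg^{\vee*}$. For orthogonality, using that $\varphi$ and $\varphi^{-1}$ preserve the canonical bilinear form and that $J$ is orthogonal, one gets $\langle\tilde J u,\tilde J v\rangle=\langle J\varphi^{-1}u,J\varphi^{-1}v\rangle=\langle\varphi^{-1}u,\varphi^{-1}v\rangle=\langle u,v\rangle$ for all $u,v\in\mgg^\vee\oplus\mgg^{\vee*}$.

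For the integrability condition I would extend $\varphi$, $J$, $\tilde J$ and both Courant brackets $\C$-linearly to the complexifications. If $L\subset(\mgg\oplus\mgg^*)\otimes\C$ denotes the $i$-eigenspace of $J$, then $\tilde J\circ\varphi=\varphi\circ J$ shows that the $i$-eigenspace of $\tilde J$ is precisely $\varphi(L)$. Given $u,v\in L$, the hypothesis yields $[u,v]_H\in L$, and the intertwining property $[\varphi(u),\varphi(v)]_{H^\vee}=\varphi([u,v]_H)$ then gives $[\varphi(u),\varphi(v)]_{H^\vee}\in\varphi(L)$; hence $\varphi(L)$ is involutive under $[\,\cdot\,,\,\cdot\,]_{H^\vee}$ and $\tilde J$ is a generalized complex structure on $(\mgg^\vee,H^\vee)$.

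No genuine obstacle arises, since every step is an immediate consequence of $\varphi$ being an isomorphism of Courant algebroids; the only point deserving a line of care is the passage to complexifications needed in order to speak of the $i$-eigenspace, together with the elementary observation that a bracket-preserving linear isomorphism carries involutive subspaces to involutive subspaces.
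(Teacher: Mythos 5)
Your proof is correct and follows exactly the route the paper intends: the corollary is stated as an immediate consequence of the preceding theorem ($\varphi$ is an isomorphism of Courant algebroids), and the paper gives no further argument. Spelling out the three conditions ($\tilde J^2=-1$, orthogonality, involutivity of the $i$-eigenspace via the complexified $\varphi$) is precisely the omitted verification, done correctly.
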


\begin{remark} Given two infinitesimal $T$-dual Lie algebras, consider the basis $\{y_1,\ldots,y_t,x_1,\ldots,x_m\}$ and $\{y_1,\ldots,y_t,z_1,\ldots,z_m\}$ as in Theorem \ref{teo.infdual}. The matrix of the isomorphism $\varphi$ on these basis and the corresponding duals has the form $$\varphi=\left(\begin{array}{cccc}
1_{t\times t} & 0 & 0 & 0\\
0 & 0 & 0 & -1_{m\times m}\\
0 & 0 & 1_{t\times t} & 0\\
0 & -1_{m\times m} & 0 & 0
\end{array}\right). $$
\end{remark}

\begin{ex}\label{KTT}
Let $\mh_3$ be the Heisenberg Lie algebra: it has a basis $\{e_1,e_2,e_3\}$ such that the only non-zero bracket is $[e_1,e_2]=e_3$. Consider $\mg=\mh_3 \oplus \mathbb{R} $ and ${\ma =\langle e_3 \rangle \oplus \mathbb{R} \subset \mg}$. As observed in Corollary \ref{toro2step}, $(\mg,\ma, 0)$ is dual to  $(\ma_4,\ma_2,H^\vee)$, where $\ma_i$ is the abelian algebra of dimension $i$ and $H^\vee \neq 0$. Explicitly, using the Malcev notation: $\mg=(0,0,e^{12},0)$, $\ma_4=(0,0,0,0)$ and $H^\vee=e^{123}$.

The symplectic structures on $\mg$ are, according to \cite{Ovando}, of the form 
$$\omega=a_{12}e^{12}+a_{13}e^{13} + a_{14} e^{14}+a_{23}e^{23} + a_{24} e^{24},$$ $$\mbox{with}\ a_{14}a_{23}-a_{13}a_{24} \neq 0. $$ 

We regard them as generalized complex structures and use Corollary \ref{transfer} to transport them to generalized complex structures on $\mathfrak{a}_4$. If $a_{12}=0$ the resulting structure is:
$$J=\left(\begin{array}{cc}
0 & Y^{-1} \\ 
-Y & 0
\end{array}\right)\ \mbox{with} \ Y=\left(\begin{array}{cc}
a_{13} & a_{23} \\ 
a_{14} & a_{24}
\end{array}\right).$$ This endomorphism satisfies $J^2=-1$ and the generalized complex structure $\mathcal J_J$ it induces is integrable with respect to $H^\vee$ and is of type $2$ (see \cite{CG} for notion of type), that is, it is generalized complex of complex type. Precisely, $J$ is $H^\vee$-integrable in accordance to the definition we give in Subsection \ref{app}.

In the next table we write explicitly the correspondence between symplectic and {$H^\vee$-integrable} complex structures.

\begin{center}
\begin{table}[ht]
\begin{tabular}{|c|c|}
\hline
Symplectic structure on $\mg$ & $H^\vee$-integrable complex structure on $\ma_4$\\ 
\hline 
$a_{13}e^{13} + a_{14} e^{14}+a_{23}e^{23} + a_{24} e^{24}$ &  $J=\left(\begin{array}{cc}
0 & Y^{-1} \\ 
-Y & 0
\end{array}\right)$, $Y=\left(\begin{array}{cc}
a_{13} & a_{23} \\ 
a_{14} & a_{24}
\end{array}\right)$\\ 
\hline
\end{tabular} 
\end{table}
\end{center}
\end{ex}

\subsection{Applications: symplectic structures on $2$-step nilpotent Lie algebras}\label{app}

As seen in Corollary \ref{toro2step}, if $\mg$ is $2$-step nilpotent and $[\mg,\mg]\subset \ma$, the dual of $(\mg,\ma,0)$ is $(\mg^\vee,\ma^\vee,H^\vee)$, where $\mg$ is the abelian algebra and $H^\vee\neq 0$. If, additionally, we have that $\mg$ is $2n$-dimensional and $\ma$ is $n$-dimensional, the symplectic structures of $\mg$ such that $\ma$ is Lagrangian are transported (via $\varphi$) to  complex structures in $\mg^\vee$ such that $\ma^\vee$ is real (this was already observed in \cite{CG}). This is exactly the situation of the Example \ref{KTT} above.

Using this idea, to look for symplectic structures of this kind on $2$-step nilpotent algebra is the same thing than to look for complex structures on abelian algebras. But since $H^\vee\neq 0$, these complex structures are not the usual ones, they need to be compatible with $H^\vee$ in some sense. In the following we explain this compatibility.

Let $J$ be an almost complex structure on a Lie algebra $\mg$, $H\in \Lambda^3\mg^*$ and consider $\Jj: \mg \oplus \mg^* \to \mg \oplus \mg^*$ $$\Jj =\left(\begin{array}{cc}
-J & 0\\
0 & J^*\end{array}\right).$$ $\Jj$ is orthogonal and satisfies $\Jj^2=-1$. Suppose its $i$-eigenspace is involutive with respect to the Courant bracket twisted by $H$. This is equivalent to the annihilation of the ``Nijenhuis tensor'' defined using the Courant bracket:
$$0 = \Jj[\cdot,\cdot]_H - [\Jj(\cdot),\cdot]_H - [\cdot,\Jj(\cdot)]_H - \Jj[\Jj(\cdot),\Jj(\cdot)]_H$$

Plugging in vectors $x,y \in \mg$ and separating vectors and $1$-forms we get
$$\left\{\begin{array}{ll}
-J[x,y]+[Jx,y]+[x,Jy]+J[Jx,Jy]=0\\
J^*(i_xi_yH)+i_{Jx}i_yH+i_xi_{Jy}H-J^*(i_{Jx}i_{Jy}H)=0.
\end{array}\right.$$
The first equation is the usual integrability condition of complex structures. The second one, when we plug in a third vector $z \in \mg$, is this rather nice and symmetrical condition:
\begin{equation}\label{HJ}
H(Jx,y,z)+H(x,Jy,z)+ H(x,y,Jz)=H(Jx,Jy,Jz)\ \ \ \forall x,y,z \in \mg.
\end{equation}

One can show that this necessary condition for the involutivity  of the $i$-eigenspace of $\Jj$ is also sufficient.

\begin{defi}
An almost complex structure is called $H$-integrable if it is integrable and satisfies (\ref{HJ}).
\end{defi}

In order to produce a class of symplectic $2$-step nilpotent Lie algebras we will fix a complex structure $J$ on the abelian algebra and check for which $H$ this $J$ is $H$-integrable. For each of these $H$ we can build the dual $2$-step nilpotent algebra, which has an invariant symplectic structure: $\varphi\circ \mathcal J_J \circ \varphi^{-1}$ (see Corollary \ref{transfer}).  

Let $\mg$ be the $2n$-dimensional abelian Lie algebra with basis $\{e_1,\dots, e_{2n}\}$ and $\ma=\langle e_{n+1},\dots,e_{2n} \rangle$. Let $J:\mg\to \mg$ be the complex structure given by $Je_i=e_{n+i}$ for $i=1,\dots,n$.

Let's check for which $H\in \Lambda^3 \mg^*$ $J$ is $H$-integrable. For $i\leq n$ the equation (\ref{HJ}) with $x=e_i, y=e_{n+i}, z=e_k$ boils down to $$-H(e_{n+1},e_i,Je_k)=H(e_i,e_{n+1},Je_k), $$ which is not an extra condition on $H$, since it is already skew-symmetric. This kind of redundancy happens every time we pick triples $e_i,e_j,e_k$ such that two of them are related by $J$.

Now take $i,j,k\in \{1 \dots,n\}$ all distinct (since equation (\ref{HJ}) holds trivially if, for instance, $x=y$). For $x=e_i, y=e_j, z=e_k$ we get
\begin{equation}\label{HJ2}
H(e_{n+i},e_{n+j},e_{n+k})=H(e_{n+i},e_j,e_k)+H(e_i,e_{n+j},e_k)+ H(e_i,e_j,e_{n+k})
\end{equation}
For $x=Je_i=e_{n+i}, y=e_j, z=e_k$ we get
\begin{equation}\label{HJ3}
H(e_i,e_j,e_k)=H(e_i,e_{n+j},e_{n+k})+H(e_{n+i},e_j,e_{n+k})+ H(e_{n+i},e_{n+j},e_k)
\end{equation}
All the other triples give restrictions equivalent to one of these. 

For us to be able to build the dual algebra, $(\mg,\ma,H)$ must be an admissible triple. This implies that everything vanishes on equation (\ref{HJ3}) and equation (\ref{HJ2}) becomes
\begin{equation}\label{HJ4}
H(e_{n+i},e_j,e_k)+H(e_i,e_{n+j},e_k)+ H(e_i,e_j,e_{n+k})=0.
\end{equation}
We have $\binom{n}{3}$ equations like this one. We summarize the discussion above in the next proposition.

\begin{pro}
The complex structure $J$ is $H$-integrable if and only if it satisfies equation (\ref{HJ4}) for all $i,j,k$.
\end{pro}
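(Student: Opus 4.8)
The statement records the outcome of the case analysis carried out in the paragraphs above; to make that into a self-contained argument I would proceed as follows. First, observe that $J$ is automatically integrable: since $\mg$ is abelian all brackets vanish, so the first of the two integrability equations, $-J[x,y]+[Jx,y]+[x,Jy]+J[Jx,Jy]=0$, holds trivially. Hence $J$ is $H$-integrable precisely when \eqref{HJ} holds for all $x,y,z\in\mg$, that is, when the trilinear map
$$P(x,y,z):=H(Jx,y,z)+H(x,Jy,z)+H(x,y,Jz)-H(Jx,Jy,Jz)$$
vanishes identically. A short computation using the skew-symmetry of $H$ and $J^2=-1$ shows that $P$ is itself totally skew-symmetric, even though its four terms are not individually so; therefore it is enough to check $P(e_a,e_b,e_c)=0$ on basis triples with $a<b<c$.

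I would then split these triples according to the ``underlying index'' $\bar a\in\{1,\dots,n\}$ (with $\bar a=a$ for $a\le n$ and $\bar a=a-n$ otherwise). If two of $\bar a,\bar b,\bar c$ coincide we may, by skew-symmetry, assume they are the first two, so that $e_b=Je_a$ and $Je_b=-e_a$; then the first two terms of $P$ vanish identically and the remaining two cancel, so these triples impose no condition on $H$ --- this is the redundancy already noted. If $\bar a,\bar b,\bar c$ are distinct they form a $3$-element set $\{i,j,k\}\subset\{1,\dots,n\}$ with exactly one index drawn from each pair $\{\ell,n+\ell\}$, and the triple carries some number $s\in\{0,1,2,3\}$ of shifted indices. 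Normalising the order of the arguments by skew-symmetry and expanding $J$ on the shifted slots via $J^2=-1$, one checks that $P=0$ reproduces \eqref{HJ2} when $s$ is even and \eqref{HJ3} when $s$ is odd. Thus, before admissibility is used, $J$ is $H$-integrable if and only if \eqref{HJ2} and \eqref{HJ3} hold for all distinct $i,j,k$.

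Finally I would invoke the admissibility of $(\mg,\ma,H)$: each term on the right-hand side of \eqref{HJ2} and of \eqref{HJ3} pairs two of the vectors $e_{n+1},\dots,e_{2n}$ spanning $\ma$ and so vanishes, whence --- as already recorded before the statement --- \eqref{HJ3} carries no information and \eqref{HJ2} collapses to \eqref{HJ4}. Reading the equivalences of the previous step backwards yields the converse implication.

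The only delicate point is the bookkeeping in the second step: one must be certain that all eight shift patterns on a triple with three distinct underlying indices really do fold onto just the two equations \eqref{HJ2} and \eqref{HJ3}, with no additional independent relation escaping. This is a mechanical consequence of the total skew-symmetry of $P$ together with its compatibility with the involution $e_\ell\leftrightarrow e_{n+\ell}$ (conjugation by $J$), and so is not a real obstacle.
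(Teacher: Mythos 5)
Your first two steps are sound and in fact supply details the paper leaves implicit: integrability of $J$ is automatic on the abelian algebra, the defect $P(x,y,z)=H(Jx,y,z)+H(x,Jy,z)+H(x,y,Jz)-H(Jx,Jy,Jz)$ is totally skew-symmetric, triples with two indices paired by $J$ impose nothing, and the remaining triples fold onto \eqref{HJ2} (even number of shifted slots) and \eqref{HJ3} (odd number). The gap is in the last step, where admissibility is applied. You assert that each term on the right-hand side of \eqref{HJ2} pairs two of the vectors $e_{n+1},\dots,e_{2n}$; it does not --- each of $H(e_{n+i},e_j,e_k)$, $H(e_i,e_{n+j},e_k)$, $H(e_i,e_j,e_{n+k})$ contains exactly \emph{one} vector of $\ma$, and it is the left-hand side $H(e_{n+i},e_{n+j},e_{n+k})$ that admissibility kills. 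So \eqref{HJ2} does collapse to \eqref{HJ4}, but for the opposite reason to the one you give; had your premise been correct, \eqref{HJ2} would have become vacuous rather than become \eqref{HJ4}.

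The more serious point is your conclusion that \eqref{HJ3} ``carries no information.'' Admissibility does annihilate the right-hand side of \eqref{HJ3} (there each term really does contain two vectors of $\ma$), but the left-hand side $H(e_i,e_j,e_k)$ survives, so \eqref{HJ3} becomes the genuine constraint $H(e_i,e_j,e_k)=0$ for distinct $i,j,k\le n$. This is what the paper's phrase ``everything vanishes on equation \eqref{HJ3}'' is recording, and it is visible in the $n=3$ example, where the resulting $H$ contains no $e^{123}$ term. A proof along your lines must therefore carry the condition $h_{ijk}=0$ ($i,j,k\le n$) alongside \eqref{HJ4}, or at least flag that the proposition is to be read with this constraint absorbed; declaring \eqref{HJ3} vacuous loses it, and then the ``if'' direction fails: for $n=3$ the form $H=e^{123}$ is admissible and satisfies \eqref{HJ4} trivially, yet \eqref{HJ} fails on $(e_4,e_2,e_3)$, so $J$ is not $H$-integrable.
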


\begin{remark}
One can start with a different complex structure and do the same calculations above to get a condition similar to (\ref{HJ4}).
\end{remark}

\begin{ex}For $n=3$ the only equation is ($h_{ijk}:=H(e_i,e_j,e_k)$)
$$h_{234}+h_{126}=h_{135} $$
Then $H$ must be
\begin{eqnarray*}
H & = & (h_{124}e^{12}+h_{134}e^{13}+h_{234}e^{23})\wedge e^4\\
& + & (h_{125}e^{12}+(h_{234}+h_{126})e^{13}+h_{235}e^{23})\wedge e^5\\
& + & (h_{126}e^{12}+h_{136}e^{13}+h_{236}e^{23})\wedge e^6
\end{eqnarray*}

There is only one $6$-dimensional $2$-step nilpotent Lie algebra that admits no symplectic form: $\ma_5(\mathbb{R}) \times \mathbb{R}=(0,0,0,0,e^{12}+e^{34},0)$. But its center is $2$-dimensional, so it does not fit in our criteria. There are three $2$-step nilpotent Lie algebras with center of dimension $3$ or bigger and here are suitable choices of $H$ to get each one of these algebras as dual of the $6$-dimensional abelian algebra:
\begin{center}
\begin{table}[ht]
\begin{tabular}{|l|l|}
\hline
$2$-step nilpotent Lie algebra & 3-form $H$ \\ 
\hline 
\hline
$(0,0,0,0,0,e^{12})$ & $e^{126}$  \\ 
\hline 
$(0,0,0,0,e^{12},e^{13})$ &$e^{125}+e^{136}$ \\
\hline
$(0,0,0,0,e^{12},e^{13},e^{23})$& $e^{124}+e^{135}+e^{126}+e^{236}$\\
\hline
\end{tabular} \smallskip

\caption{6-dimensional 2-step nilpotent Lie algebra and the corresponding 3-form $H$}\label{table6dim}
\end{table}
\end{center}
\end{ex}
\begin{ex} For $n=4$, the $\binom{4}{3}=4$ equations are 
$$h_{127}+h_{235}=h_{136}\ \ \ \ \ h_{128}+h_{245}=h_{146}$$
$$h_{138}+h_{345}=h_{147}\ \ \ \ \ h_{238}+h_{346}=h_{247}$$

In \cite{wang2013classification} Wang, Chen and Niu classify the $8$-dimensional complex nilpotent Lie algebras with $4$-dimensional center. Ten of such Lie algebras are $2$-step nilpotent. Regarding them as $8$-dimensional real Lie algebras, we can choose suitable $H$ for all them, except one: $\ma_5(\mathbb{R}) \times \mathbb{R}^3$, which is not symplectic. We summarize these computations in Table \ref{tableH}.
\begin{center}
\begin{table}[ht]
\begin{tabular}{|l|l|}
\hline
$2$-step nilpotent Lie algebra & 3-form $H$ \\ 
\hline 
\hline
$(0,0,0,0,-e^{12}-e^{34},-e^{13},0,0)$ & $-e^{136}-e^{127}-e^{347}$  \\ 
\hline 
$(0,0,0,0,-e^{12}-e^{34},-e^{13}-e^{24},0,0)$ &$ -e^{136}-e^{246}-e^{127}-e^{347}$ \\
\hline
$(0,0,0,0,-e^{12},-e^{23},-e^{24},0)$& $-e^{235}+e^{127}-e^{348}$\\
\hline
$(0,0,0,0,-e^{12},-e^{23},-e^{34},0)$ & $-e^{125}-e^{236}-e^{347}$\\
\hline
$(0,0,0,0,-e^{12}-e^{34},-e^{23},-e^{24},0)$  & $-e^{235}+e^{127}+e^{347}-e^{248}$ \\
\hline
$(0,0,0,0,-e^{12}-e^{34},-e^{13},-e^{24},0)$ & $-e^{125}-e^{345}-e^{246}+e^{138}$\\
\hline
$ (0,0,0,0,-e^{12},-e^{23},-e^{34},-e^{24})$ & $-e^{125}-e^{236}-e^{347}-e^{248} $ \\
\hline
$(0,0,0,0,-e^{12},-e^{23},-e^{34},-e^{14})$ & $-e^{125}-e^{236}-e^{347}-e^{148}$ \\
\hline
$(0,0,0,0,-e^{12},-e^{23},-e^{34},-e^{13}-e^{24})$ & $-e^{345}-e^{126}-e^{237}+e^{138}+e^{248}$\\
\hline
\end{tabular} \smallskip

\caption{8-dimensional 2-step nilpotent Lie algebra and the corresponding 3-form $H$}\label{tableH}
\end{table}
\end{center}

\end{ex}

\section{$T$-duality on nilmanifolds}

\subsection{Structure of nilmanifolds}

A nilmanifold is a compact homogeneous manifold $E=\Lambda\bs G$ where $G$ is a simply connected nilpotent Lie group $G$ and $\Lambda$ is a discrete cocompact subgroup. We say that $E$ is $k$-step nilpotent if $G$ is so.

Recall that the exponential map $\exp:\mgg\lra G$ of a simply connected nilpotent Lie group is a diffeomorphism \cite{VAR}. A result by Malcev states that  $G$ admits a discrete cocompact subgroup (also called a lattice) if and only if there exists a basis of $\mgg$ for which the structure constants are rationals \cite{RA}. Equivalently, $\mg=\mg_0\otimes_\Q \R$ for some Lie algebra $\mg_0$ over $\Q$. Given a lattice $\Lambda$ of $G$, $\Lambda_\bullet=\span_\Z\exp^{-1}(\Lambda)$ is a discrete subgroup of the vector space $\mgg$ of maximal rank and the structure constants of a basis contained in $\Lambda_\bullet$ are rationals. Conversely, assume $\mgg$ has a basis such that the structure constants are rationals and let $\mgg_0$ be the Lie algebra over $\Q$ spanned by this basis. Then for any discrete subgroup $\Lambda_\bullet$ of maximal rank contained in $\mgg_0$, the subgroup $\lela \exp \Lambda_\bullet\rira$ of $G$ is a discrete cocompact subgroup.

Any left invariant differential form on $G$ induces a differential form on $E$. A differential form $\omega$ on $E$ is called invariant if $\alpha^*\omega$ is left invariant, where $\alpha:G\lra E$ is the quotient map. Invariant forms on $G$ are in one-to-one correspondence with alternating forms on $\mgg$, the Lie algebra of $G$. The de Rham cohomology of a nilmanifold $E=\Lambda\bs G$ can be computed from the Chevalley-Eilenberg complex of the Lie algebra $\mgg$ of $G$ \cite{NO}. In particular, any closed differential form on $E$ is cohomologous to an invariant one.
\smallskip

Below we introduce the structure of nilmanifolds as the total space of principal torus bundle over another nilmanifold.

Let $E=\Lambda\bs G$ be a nilmanifold and let $A$ be a non-trivial $m$-dimensional central normal subgroup of $G$ (always exists since $G$ is nilpotent). Hence $N=G/A$ is a nilpotent Lie group. The subgroup $\Lambda\cap A$ is a lattice in $A$ \cite{RA} and $T=(\Lambda\cap A)\bs A$ is an $m$-dimensional torus. Since $A\subset Z(G)$, the center of $G$, one has a right action of $T$ on $E$ 
$$ x\cdot a =\Lambda g\cdot  (\Lambda\cap A) z=\Lambda gz\in E,\qquad \mbox{for  }x=\Lambda g\in E,\; a=(\Lambda\cap A )z\in T.$$ The quotient space $M=E/T$ is diffeomorphic to $\Gamma\bs N$ where $\Gamma=\Lambda A/A\simeq \Lambda/\Lambda \cap A$ is a discrete cocompact group of $N$, thus $M=\Gamma\bs N$ is a nilmanifold. Therefore $E$ is the total space of the principal bundle $q:E\lra M$ with fiber $T$. Given $\Lambda g\in E$, denote by $[\Lambda g]_T$ its orbit under the $T$ action, then the fiber bundle map satisfies $q([\Lambda g]_T)=\Gamma n$ where $n=gA$. For future reference we denote $(G,A,\Lambda)$ the principal fiber bundle constructed above.

Notice that two fiber bundles  $(G,A,\Lambda)$ and  $(\tilde G,\tilde A,\tilde \Lambda)$ are equivalent if and only if each of the corresponding groups in the triple are isomorphic. In fact, since $\Lambda$ is the first homotopy group of $\Lambda\bs G$, the existence of a diffeomorphism $f:\Lambda\bs G\lra \tilde \Lambda\bs \tilde G$ implies $\Lambda\simeq \tilde \Lambda$. Mostow's rigidity  theorem \cite[Theorem 3.6]{RA} asserts that this isomorphism extends to an isomomorphism between $G$ and $\tilde G$. Since $A$ and $\tilde A$ are abelian of the same dimension, we conclude the isomorphisms between them all.

Although the following result seems to be well known in the geometry community, the only proof available in the literature is for 2-step nilmanifolds which was given by Palais and Stewart. For the sake of  completeness of the presentation we include a sketch of the proof here, which is a generalization of that in \cite{PS}.

\begin{teo}\label{teo.psgen}
A connected compact differential manifold $E$ is a nilmanifold if and only if it is the total space of a principal torus bundle over a nilmanifold.
\end{teo}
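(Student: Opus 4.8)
The plan is to prove both implications separately, relying on standard structure theory of simply connected nilpotent Lie groups together with the fiber-bundle construction $(G,A,\Lambda)$ described above.

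For the "only if" direction, suppose $E = \Lambda\bs G$ is a nilmanifold. First I would invoke the existence of a nontrivial central normal subgroup $A$ of $G$ (e.g.\ $A$ can be taken inside the center $Z(G)$, which is nontrivial since $G$ is nilpotent). Then the construction $(G,A,\Lambda)$ recalled just before the statement produces directly a principal $T$-bundle $q\colon E\lra M$ over the nilmanifold $M=\Gamma\bs N$ with $N=G/A$, where $\Lambda\cap A$ is automatically a lattice in $A$ by Raghunathan's result. So this direction is essentially immediate from the material already developed.

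For the "if" direction, suppose $E$ is the total space of a principal $T$-bundle over a nilmanifold $M=\Gamma\bs N$, with $T$ an $m$-torus. The idea, following Palais--Stewart, is to reconstruct $E$ as a homogeneous space of a nilpotent group. A principal $T$-bundle over $M$ is classified by its Euler class in $H^2(M;\Z^m)$; since $M$ is a nilmanifold, $H^2(M;\R)$ is computed by the Chevalley--Eilenberg complex of the Lie algebra $\mn$ of $N$, and one can represent the (rational) Euler class by an invariant closed $2$-form, i.e.\ by a closed $2$-form $\Psi\colon\mn\times\mn\lra\R^m$ with rational periods with respect to the lattice $\Gamma$. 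Form the central extension $\mgg=\mn_\Psi$ as in the preliminaries; by Proposition \ref{pro.solucentext} it is nilpotent, and rationality of $\Psi$ guarantees (via Malcev) that the simply connected group $G$ with Lie algebra $\mgg$ admits a lattice $\Lambda$ projecting onto $\Gamma$ with $\Lambda\cap A$ a lattice in the central $\R^m$. Then $(G,A,\Lambda)$ yields a principal $T$-bundle over $M$ whose Euler class agrees with that of $E$, hence $E\cong\Lambda\bs G$ is a nilmanifold.

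The main obstacle is the bookkeeping in the "if" direction: one must check that the Euler class of an arbitrary principal torus bundle over a nilmanifold can be represented by an \emph{invariant} closed $2$-form with the correct integrality/rationality properties, and that the resulting lattice $\Lambda$ in $G$ indeed has $\Lambda\cap A$ cocompact in $A$ and reproduces the prescribed bundle up to isomorphism. This uses Nomizu's theorem identifying $H^*(M;\R)$ with Lie algebra cohomology, the classification of torus bundles by their Euler class, and the Malcev rationality criterion; the diffeomorphism type of $E$ then follows since bundle equivalence is detected by the Euler class. I would also note (as the paper does for the $(G,A,\Lambda)$ discussion) that Mostow rigidity is not needed here, only for the uniqueness-type remarks surrounding the statement.
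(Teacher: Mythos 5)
Your ``only if'' direction is fine and coincides with what the paper does: it follows at once from the $(G,A,\Lambda)$ construction recalled before the statement, with $A$ any nontrivial central subgroup. For the ``if'' direction, however, your route through the Euler class has a genuine gap. Principal $T^m$-bundles over $M$ are classified by the \emph{integral} class in $H^2(M;\Z^m)$, whereas an invariant closed $2$-form $\Psi$ produced via Nomizu's theorem only captures the image of that class in $H^2(M;\R^m)$. For a general nilmanifold base this map has nontrivial kernel: for instance, for $M=\Lambda_k\bs H_3$ as in Example \ref{mathai} one has $H_1(M;\Z)\simeq\Z^2\oplus\Z/2k\Z$, so $H^2(M;\Z)$ contains torsion and there exist non-isomorphic circle bundles over $M$ with the same real Euler class. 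Your model $(G,A,\Lambda)$ therefore need not be bundle-isomorphic (nor obviously diffeomorphic) to the given $E$; the final step ``the Euler classes agree, hence $E\cong\Lambda\bs G$'' is exactly where the argument breaks. To repair it you would have to show that the torsion part of the Euler class can also be realized by a suitable choice of the lattice $\Lambda$ projecting onto $\Gamma$ --- a nontrivial matching problem you do not address. (Over a torus base, as in the original Palais--Stewart setting, $H^2$ is torsion-free and your argument would go through.)

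The paper avoids this issue entirely by never leaving $E$: after using Nomizu's theorem to modify the connection so that its curvature is the pullback of an invariant $2$-form $\Psi_0$ on $M$, it defines an injective Lie algebra homomorphism $\xi:\mgg=\mn_\Psi\lra\mathcal X(E)$ sending $\ma$ to the fundamental vector fields of the $T$-action and $\mn$ to horizontal lifts, integrates this infinitesimal action to a right $G$-action on the compact manifold $E$, and then checks transitivity and discreteness of the isotropy directly. The lattice $\Lambda=G_{x_0}$ is read off from $E$ itself, so whatever the (possibly torsion) Euler class of the given bundle is, it is automatically accounted for. If you wish to keep your classification-based strategy, you would need to supplement it with a computation of the integral Euler class of $(G,A,\Lambda)$ in terms of the central extension $1\to\Z^m\to\Lambda\to\Gamma\to 1$ and a proof that every class in $H^2(\Gamma;\Z^m)$ is realized by some such lattice.
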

\begin{proof} We already showed how a nilmanifold can be realized as the total space of a torus bundle over a nilmanifold so we focus on the converse.

Let $E$ be a compact manifold and $q:E\lra M$ a principal fiber bundle map with an $m$-dimensional torus $T$ as structure group and $M=\Gamma\bs N$ a nilmanifold. Denote $\ma$ and $\mn$ the Lie algebras of $T$ and $N$ respectively.

Let $\omega$ be a connection in $E$ and let $\Omega$ be its curvature form. Recall that all possible $\Omega$ lie inside a unique cohomology class \cite{Kob}. This is an $\ma$-valued 2-form on $E$ and since $T$ is abelian we have $\Omega=q^*\Omega_0$ with $d\Omega_0=0$. There exist an $N$-invariant closed 2-form $\Psi_0$ on $M$ and a 1-form $\theta$, both with values on $\ma$, such that  $\Omega_0-\Psi_0=d\theta$ \cite{NO}. The 2-form $\tomega=\omega-q^*\theta$ defines a connection in $E$ with curvature $\tilde\Omega=d\tilde\omega=d\omega-dq^*\theta=\Omega-q^*d\theta=q^*\Psi_0$. 
Notice that $\Psi_0$ is induced by the left  translation of a 2-form $\Psi:\mn\times\mn\lra \ma$, which is closed in $\mn$.

Each $Z\in \ma$ induces a vector field in $E$ 
and this assignment from $\ma$ to $\mathcal X(E)$ is injective, so we identify $Z\in \ma$ with its corresponding vector field in $E$. Moreover, each $X\in \mn$ induces a left invariant vector field on $N$ which projects to a vector field on $M$, we denote  $\tX$ the vector field on $E$ which is its horizontal lift with respect to $\tomega$. The following is an equality for vector fields in $E$ induced by $X,Y\in \mn$
 \begin{equation}
[\tX,\tY]=\widetilde{[X,Y]}+\tOmega(\tX,\tY)=\widetilde{[X,Y]}+\Psi_0(X,Y)=\widetilde{[X,Y]}+\Psi(X,Y).\label{corchete}
\end{equation}

Let $\mgg=\mn\oplus \ma=\mn_\Psi$ be the central extension of $\mn$ by $\Psi$ and let $G$ be the 
 simply connected nilpotent Lie group with Lie algebra $\mgg$. 
Then $A=\exp \ma$ is a closed, connected and simply connected central ideal of $G$ \cite[Theorem 3.6.2]{VAR}. Moreover $N\simeq G/A$, we denote $\beta:G\lra N$ the quotient map.

Define $\xi:\mgg\lra \mathcal X(E)$, given by $\xi(X)=\tX$
 if $X\in \mn$ and
 $\xi(Z)=Z$ if $Z\in \ma$. The fact that $[\tilde Y,Z]=0$ for any horizontal lift $\tilde Y$ of a vector field in $M$ and Eq. \eqref{corchete} 
 imply that $\xi$ is an  injective Lie algebra homomorphism.

  The map $\xi$ is an 
 infinitesimal action of $G$ on $E$ and $E$ is compact so we can lift this action \cite[Theorem 2.16.9]{VAR} to a right action of $G$ on $E$. Given $g\in G$ and $x\in E$, let $X\in \mgg$ be the unique such that $\exp\,X=g$, then the action of $g$ on $x$ is
\begin{equation} x\cdot g=\sigma_{\xi(X),x}(1),\,\mbox{ where }\sigma_{\xi(X),x} \mbox{ is the integral curve
 of }\xi(X)\mbox{ starting at }x.\label{liftaction}\end{equation}

In the rest we prove that this is a transitive action and with discrete isotropy.

The action in \eqref{liftaction} behaves as follows. If $g\in A$ then $g=\exp Z$ for some $Z\in \ma$  and  $\xi(Z)_x=\frac{d}{dt}_{|_0} (x\cdot \exp_{T} t Z)$, so we have $\sigma_{\xi(Z),x}(t)=x\cdot \exp_{T} tZ$, where $\exp_T:\ma\lra T$. Hence $x\cdot g=x\cdot \exp_{T} Z$. In particular $x\cdot g=x$ if and only if $x=x\cdot \exp_{T} Z$ and this occurs if and only if  $Z\in \Z^m$. Thus the isotropy subgroup $G_x$ of any point $x\in E$  verifies $G_x\cap A=\exp \Z^m$.

Let now $g=\exp Y$ for some $Y\in \mn$. Then $\sigma_{\xi(Y),x}$ is an horizontal curve which is the horizontal lift through $x$ of $\tau$, where $\tau(t)=\Gamma u \exp_N tY$, $\Gamma u=q(x)$. Notice that the infinitesimal vector field on $E$ generated by $Y$ by the $G$-action is the horizontal lift of the infinitesimal vector field generated by $Y$ on $M$ by the $N$-action.  Finally, let $g=\exp X$ where $X=Y+Z$, $Y\in \mn$, $Z\in\ma$. Consider $\gamma(t)= \sigma_{\xi(Y),x}(t) \cdot \exp_T t Z$. Then $\gamma(0)=x$ and for $t_0\in \R$ we have
\begin{equation}
\frac{d}{dt}_{|_{t_0}}\gamma(t)=\frac{d}{dt}_{|_{t_0}}\left( \sigma_{\xi(Y),x}(t) \cdot \exp_T t_0 Z \right)+ \frac{d}{dt}_{|_{t_0}} \left(
\sigma_{\xi(Y),x}(t_0) \cdot\exp_T t Z\right).
\end{equation}
The curve $\sigma_{\xi(Y),x}(t) \cdot \exp_T t_0 Z$ is the horizontal lift of an integral  curve  of the vector field on $M$ induced by $Y$ through the point $x\cdot \exp_T t_0 Z$, thus its  derivative at $t_0$ is the vector field $\xi(Y)$ evaluated  at the point $\sigma_{\xi(Y),x}(t_0) \cdot \exp_T t_0 Z=\gamma(t_0)$. In addition, $
\sigma_{\xi(Y),x}(t_0) \cdot\exp_T t Z$ is a curve tangent to the fiber and its derivative at $t_0$ is $Z$ evaluated at $\gamma(t_0)$. Therefore $\frac{d}{dt}_{|_{t_0}}\gamma(t)=\tilde Y_{\gamma(t_0)}+Z_{\gamma(t_0)}=\xi(X)_{\gamma(t_0)}$ and thus $\sigma_{\xi(X),x}(t)=\gamma(t)=\sigma_{\xi(Y),x}(t) \cdot \exp_T t Z$. In particular
\begin{equation}\label{action}
x\cdot g=\sigma_{\xi(Y),x}(1) \cdot \exp_T Z.
\end{equation} 
 
Let $e$ be the identity of $N$ and fix $x_0\in q^{-1}(e)$. Let $W=q^{-1}(U)$ where $U$ a neighborhood of $\Gamma \in M$, choose $w\in W$ and denote $r=q(w)$. There is some $n\in N$  such that $\Gamma \cdot n=r$ and moreover $n=\exp_N Y$ for some $Y\in \mn$. Thus $q(\sigma_{\xi(Y),x_0}(1))=r$, since $\sigma_{\xi(Y),x_0}$ is the horizontal lift of $\Gamma \exp_N tY$ through $x_0$. So there is some $a\in T$ such that $\sigma_{\xi(Y),x_0}(1)\cdot a=w$. Let $Z\in \ma$ be such that $\exp_T Z=a$, then we obtain 
$w=\sigma_{\xi(Y),x_0}(1)\cdot \exp_T Z$ which by Eq. \eqref{action} is $x_0\cdot g$ for $g=\exp (Y+Z)$. Therefore $W\subset x_0\cdot G$ and the $x_0$ orbit is open. Since the orbit is also closed we have $x_0\cdot G=E$ and the action is transitive.

In particular $E$ is diffeomorphic to $G_{x_0}\bs G$  where $G_{x_0}$ is the isotropy at $x_0$. By construction $\dim G=
 \dim N +\dim T=\dim M +\dim T=\dim E$ so the isotropy is a discrete subgroup of $G$ and $E$ is a nilmanifold.
\end{proof}

Continue with the notation in the proof. Consider the map $\alpha:G\lra E$ given by $g\mapsto x_0 \cdot  g$, and $\pi:N\lra M$ the quotient map. At this point is clear that the following is a commutative diagram \begin{equation}\begin{array}{ccc}
    G&\stackrel{\alpha}{\lra} &E\\
    \beta\downarrow&&\downarrow q\\
     N&\stackrel{\pi}{\lra} &M,
   \end{array}
 \label{groupdiagram0}\end{equation} 
and if we denote $\Lambda=G_{x_0}$ then $\beta(\Lambda)=\Gamma$, that is, $\Lambda A/A=\Gamma$. Moreover, $\Lambda\cap A=\exp_T \Z^m$ and thus $T\simeq \Lambda\cap A\bs A$. We obtain the following.

\begin{cor}\label{cor.ppalbdl}
Let $q: E\lra M$ be a principal torus bundle with $M=\Gamma\bs N$ a nilmanifold. Then there exists a simply connected nilpotent Lie group $G$, a central subgroup $A$ of $G$ and a lattice $\Lambda$ of $G$ such that $q:E\lra M$ is equivalent to $(G,A,\Lambda)$ as principal bundles.
\end{cor}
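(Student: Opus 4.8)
The plan is to extract the corollary directly from the proof of Theorem~\ref{teo.psgen} together with the remarks following it, rather than to start from scratch. First I would observe that the statement being asked for is almost a verbatim restatement of what was shown in the body of the proof: given a principal torus bundle $q:E\lra M$ with $M=\Gamma\bs N$ a nilmanifold, the construction produces a simply connected nilpotent Lie group $G$ (as the central extension $\mgg=\mn_\Psi$ exponentiated), a closed connected central subgroup $A=\exp\ma$, and a transitive $G$-action on $E$ with discrete isotropy $\Lambda=G_{x_0}$. So the first step is simply to invoke Theorem~\ref{teo.psgen}: $E$ is a nilmanifold, realized as $\Lambda\bs G$ via the map $\alpha:G\lra E$, $g\mapsto x_0\cdot g$.

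Next I would verify that $(G,A,\Lambda)$ is a legitimate triple in the sense defined before Theorem~\ref{teo.psgen}, i.e.\ that $A$ is central in $G$ and $\Lambda$ is a lattice. Centrality of $A$ was already noted (it is $\exp\ma$ and $\ma$ is the center-side of a central extension, so $[\ma,\mgg]=0$); that $A$ is closed, connected, simply connected comes from \cite[Theorem 3.6.2]{VAR} as cited. That $\Lambda$ is a lattice follows because $E$ is compact and $\Lambda\bs G\simeq E$; alternatively one notes $\Lambda\cap A=\exp_T\Z^m$ is a lattice in $A$ and $\Lambda A/A=\Gamma$ is a lattice in $N$, and an extension of a lattice by a lattice in a nilpotent group is a lattice.

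The only remaining point — and the one requiring the small argument recorded in the paragraph after the proof — is that the abstractly-constructed bundle $(G,A,\Lambda)$ is \emph{equivalent as a principal $T$-bundle} to the original $q:E\lra M$. For this I would exhibit the commutative diagram \eqref{groupdiagram0}: $\alpha:G\lra E$ descends to the diffeomorphism $\Lambda\bs G\xrightarrow{\sim}E$, and since $\beta(\Lambda)=\Gamma$ the quotient map $\beta:G\lra N$ descends to $\Gamma\bs N\xrightarrow{\sim}M$; these two diffeomorphisms intertwine the bundle projection of $(G,A,\Lambda)$ (namely $[\Lambda g]_T\mapsto \Gamma(gA)$) with $q$. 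Finally one checks $T$-equivariance: the structure-group torus of $(G,A,\Lambda)$ is $(\Lambda\cap A)\bs A=\exp_T\Z^m\bs A$, and by Eq.~\eqref{action} the right $G$-action restricted to $A$ is precisely the right action of this torus, which under $\alpha$ matches the original $T$-action on $E$ up to the identification $T\simeq(\Lambda\cap A)\bs A$ already established.

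The main obstacle is essentially bookkeeping rather than a genuine difficulty: one must be careful that the identifications $T\simeq(\Lambda\cap A)\bs A$ and $M\simeq\Gamma\bs N$ chosen in the proof are compatible with the ones implicit in the hypothesis ``$M=\Gamma\bs N$'', so that ``equivalence of principal bundles'' holds on the nose and not merely up to an automorphism of the base or fiber. This is handled by the Mostow rigidity remark preceding the theorem, which guarantees that any two such triples giving diffeomorphic total spaces have isomorphic groups in all three slots; combined with the explicit diagram \eqref{groupdiagram0} this closes the argument. Thus the proof is: apply Theorem~\ref{teo.psgen}, read off $(G,A,\Lambda)$ from its proof, and invoke diagram \eqref{groupdiagram0} together with Eq.~\eqref{action} to identify the bundle structures.
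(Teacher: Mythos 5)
Your proposal is correct and takes essentially the same route as the paper: the corollary is extracted directly from the proof of Theorem~\ref{teo.psgen} by reading off the triple $(G,A,\Lambda=G_{x_0})$ and using the commutative diagram \eqref{groupdiagram0} together with $\beta(\Lambda)=\Gamma$ and $\Lambda\cap A=\exp_T\Z^m$ to identify $T\simeq(\Lambda\cap A)\bs A$ and match the bundle structures.
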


From the proof above, in the triple $(G,A,\Lambda)$ the Lie group $G$ is the central extension of $N$ by the curvature of a connection in $q:E\lra M$, $A$ is a central subgroup of $G$ of the same dimension of the fiber and $\Lambda$ is a lattice of $G$ projecting over $\Gamma$ by $\beta$. 

\subsection{$T$-duality on nilmanifolds}

Because of its natural structure of principal torus bundles, nilmanifolds are, then, a good context to work with $T$-duality as introduced in Subsection \ref{BHM}.

\medskip

Fix a principal torus bundle $q:E\lra M$ where $M$ is a nilmanifold and identify it with its corresponding triple $(G,A,\Lambda)$ given by Corollary \ref{cor.ppalbdl}. 

\begin{defi} An invariant closed 3-form $H$ on $E$ is admissible for the bundle $q:E\lra M$ if $H(X,Y,\cdot)=0$ for any $X,Y$ vector fields tangent to $A$.
\end{defi}

Admissibility of the 3-form is independent of the lattice. In fact, if $H$ is admissible for $q:E\lra M$, identified with $(G,A,\Lambda)$, then  $H$ is admissible for $(G,A,\tilde\Lambda)$ for any $\tilde\Lambda$ lattice in $G$. Clearly, $H$ being admissible for $(G,A,\Lambda)$ implies $(\mgg,\ma,H)$ is an admissible triple.

\begin{defi} Let $q:E\lra M$ be a principal torus bundle and let $H$ be an admissible closed 3-forms. A manifold $E^\vee$ together with a 3-form $H^\vee$ is said to be invariantly dual to $(E,H)$ if $E^\vee$ is the total space of a principal torus bundle over $M$, $H^\vee$ is an admissible form for this bundle and $(E^\vee, H^\vee)$ is $T$-dual to $(E,H)$ with invariant 2-form $F$ in the correspondence space. (Notice that  $E\times_M E^\vee$ is also a nilmanifold.)
\end{defi}

The following is a clear result.
\begin{pro} \label{pro.suf} Let $E$ and $E^\vee$ be torus bundles over the same nilmanifold $M$ and let $H$ and $H^\vee$ be corresponding admissible closed 3-forms for each bundle. If $(E, H)$ and $(E^\vee, H^\vee)$ are invariant $T$-duals then $(\mgg,\ma,H)$ and $(\mgg^\vee,\ma^\vee,H^\vee)$ are dual triples.
\end{pro}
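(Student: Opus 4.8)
The plan is to show that the global, differential-geometric $T$-duality data on the nilmanifolds descends to the infinitesimal $T$-duality data on the Lie algebras, i.e.\ that everything in sight can be taken invariant and then read off at the identity. First I would recall the structure maps: writing $E=\Lambda\bs G$, $E^\vee=\Lambda^\vee\bs G^\vee$ with $G,G^\vee$ central extensions of $N$ (the Lie group of $\mn$) by $A,A^\vee$ via Corollary \ref{cor.ppalbdl}, I get at the Lie algebra level that $\mgg,\mgg^\vee$ are central extensions of $\mn$ by the abelian ideals $\ma,\ma^\vee$, and in particular $\mgg/\ma\simeq\mn\simeq\mgg^\vee/\ma^\vee$, which is the first requirement in the definition of dual triples. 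Admissibility of $H$ (resp.\ $H^\vee$) for the bundle $q:E\lra M$ (resp.\ $q^\vee:E^\vee\lra M$) translates immediately into $(\mgg,\ma,H)$ and $(\mgg^\vee,\ma^\vee,H^\vee)$ being admissible triples, as already noted in the text just before Proposition \ref{pro.suf}.

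The substantive point is the 2-form $F$. By hypothesis $E\times_M E^\vee$ is a nilmanifold with Lie algebra the correspondence algebra $\mc=\{(x,y)\in\mgg\oplus\mgg^\vee : q(x)=q^\vee(y)\}$, and the $T$-duality is realized by an invariant 2-form $F$ on $E\times_M E^\vee$ with $p^*H-p^{\vee*}H^\vee=dF$ and $F$ nondegenerate on the fibers. Since $F$ is invariant, it is identified with an element of $\Lambda^2\mc^*$; the relation $p^*H-p^{\vee*}H^\vee=dF$, being an identity of invariant forms on $E\times_M E^\vee$, holds at the Lie algebra level with $d$ the Chevalley--Eilenberg differential, because the de Rham differential of invariant forms on a nilmanifold is computed by the Chevalley--Eilenberg differential (this is the Nomizu-type statement recalled in the subsection on the structure of nilmanifolds). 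Here I should be slightly careful that the pullbacks $p^*,p^{\vee*}$ of invariant forms along the bundle projections $E\times_M E^\vee\to E$, $\to E^\vee$ correspond exactly to the linear pullbacks along $p:\mc\to\mgg$, $p^\vee:\mc\to\mgg^\vee$; this is routine since these projections are induced by the corresponding Lie group homomorphisms and carry invariant forms to invariant forms.

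Finally, nondegeneracy of $F$ on the torus fibers of $E\times_M E^\vee\to E$ and $\to E^\vee$ is exactly nondegeneracy of $F\in\Lambda^2\mc^*$ on the subspaces $\mk=\{(x,0):x\in\ma\}$ and $\mk^\vee=\{(0,y):y\in\ma^\vee\}$, i.e.\ ``non-degenerate in the fibers'' in the sense defined for $\mc$ before Theorem \ref{teo.infdual}; again this is because the fiber tangent spaces are precisely the images of $\ma$ and $\ma^\vee$ under the inclusions $A\hookrightarrow G$, $A^\vee\hookrightarrow G^\vee$. Assembling these three observations — the common quotient $\mn$, admissibility of the triples, and an invariant $F\in\Lambda^2\mc^*$ nondegenerate in the fibers with $p^*H-p^{\vee*}H^\vee=dF$ — gives precisely the definition of dual triples, so $(\mgg,\ma,H)$ and $(\mgg^\vee,\ma^\vee,H^\vee)$ are dual. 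The only place where any care is needed (hence the ``clear result'' remark in the text) is the bookkeeping that the correspondence space $E\times_M E^\vee$ really is the nilmanifold whose Lie algebra is $\mc$ and that invariant forms and their differentials on it match the algebraic picture; everything else is a direct translation.
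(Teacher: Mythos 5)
Your proposal is correct and is exactly the unwinding of definitions that the paper has in mind when it declares this "a clear result" and omits the proof: identify the invariant $F$ on the correspondence nilmanifold with an element of $\Lambda^2\mc^*$, use that the de Rham differential of invariant forms is the Chevalley--Eilenberg differential so $p^*H-p^{\vee*}H^\vee=dF$ descends verbatim, and observe that fiberwise nondegeneracy and admissibility translate directly. No gap; this matches the paper's (implicit) argument.
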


The next example is due to Mathai and Rosenberg \cite{MR} (see also \cite{Mat}). 
\begin{ex}\label{mathai}
The 3-dimensional Heisenberg Lie group $H_3$ can be realized on $\R^3$ with its usual differentiable structure together with the multiplication law
$$(x,y,z)\cdot(x',y',z')=\left(x+x',y+y',z+z'+\frac12 (xy'-yx')\right). $$ A basis of left invariant vector fields satisfying the relation $[X_1,X_2]=X_3$ and its corresponding dual basis is given, at $(x,y,z)\in \R^3$, by
$$\begin{array}{rclrcl}
X_1&=&\partial_x-\frac{y}2\partial_z&\qquad\omega_1&=&dx\\
X_2&=&\partial_y+\frac{x}2\partial_z&\qquad\omega_2&=&dy\\
X_3&=&\partial_z&\qquad\omega_3&=&dz+\frac12(ydx-xdy).
\end{array} $$
Finally, the exponential map $\exp:\mh_3\lra H_3$ is the identity, that is,  $\exp(aX_1+bX_2+cX_3)=(a,b,c)$ for all $a,b,c\in \R$.

As usual, identify the tangent bundle of  $H_3\times H_3$ at a point $(g,g')$ with the sum of the tangent bundle at $g$ and $g'$; similarly for the cotangent bundle. Denote $p$ and $p^\vee$ the projections over the first and second coordinates, respectively. Restrict these projections to the Lie subgroup $C=\{(g,g')\in H_3\times H_3: g=(x,y,z),\, g'=(x,y,z'),\ x,y,z,z'\in\R\}$, then we have $\Omega_i:=p^*\omega_i=(\omega_i,\omega_i)$ for $i=1,2$, $p^*\omega_3=(\omega_3,0)=:\Omega_3$ and $p^{\vee*}\omega_3=(0,\omega_3)=:\tilde\Omega_3$, where $d\Omega_3=d\tilde\Omega_3=\Omega_1\wedge \Omega_2$. These are invariant 1-forms in $C$.

For each $k\in \N$ let $\Lambda_k$ be the discrete cocompact subgroup $\Lambda_k=\Z\times \Z\times \frac{1}{2k}\Z$ and denote $E_k=\Lambda_k\bs H_3$. This is the total space of a principal $S^1$ bundle over $T^2$ and if $A$ is the center of $H_3$, then $(H_3,A,\Lambda_k)$ is the associated triple to this bundle.

Let $T^3=\Z^3\bs \R^3$ be the 3-torus and let $f_k:E_k\lra T^3$, $f_k(\Lambda_k (x,y,z))=\Z^3(x,y,2kz)$; this is a well defined differentiable mapping. Let $vol_k=f_k^*(vol)$ where $vol=dx\wedge dy\wedge dz$ is the canonical volume form in $T^3$, then $vol_k=2k \omega^1\wedge \omega^2\wedge \omega^3$ and it is clearly an invariant closed 3-form in $E_k$;  moreover $vol_k$ is admissible for $(H_3,A,\Lambda_j)$, for any $j$. For each $j,k\in \N$, the pair $(E_k, 2jvol_k)$ is invariantly $T$-dual to $(E_j,2kvol_j)$, as we establish below.

The subgroup $\Lambda_k\times \Lambda_j\cap C$ is a lattice of $C$ since the  nilmanifolds fiber over $M=T^2$. Moreover $W:=\Lambda_k\times \Lambda_j\cap C\bs C$ is the correspondence space for the  bundle maps $q$ and $q^\vee$. 
The invariant forms $\Omega_i$ and $\tilde\Omega_3$ are induced to $W$ so we take $F=4jk\tilde\Omega_3\wedge \Omega_3$ which is an invariant non-degenerate 2-form in $W$. We now have 
\begin{eqnarray*}
dF&=&4jk(\Omega_1\wedge \Omega_2\wedge \Omega_3-\tilde\Omega_3\wedge\Omega_1\wedge \Omega_2)\\
&=&2j(2k \Omega_1\wedge \Omega_2\wedge \Omega_3)-2k(2j\Omega_1\wedge \Omega_2\wedge\tilde\Omega_3)\\
&=&p^*(2jvol_k)-p^{\vee*}(2kvol_j),
\end{eqnarray*}
so the (invariant) $T$-duality is proved.\end{ex}

\begin{remark}
The previous example can be extended. For any $j,k \in \N$ the pair $(E_k, H)$ is invariantly $T$-dual to $(E_j,H)$, if $H$ is a non-zero left-invariant closed 3-form in $H_3$ (induced to the nilmanifold).

In fact both $E_k$ and $E_j$ are torus bundles over $T^2$, and because of dimensionality reasons, $H=\lambda \omega^1 \wedge \omega^2\wedge \omega^3$ for some $\lambda\neq 0$. As above, the 2-form $F=\lambda \tOmega_3\wedge \Omega_3$ gives the duality.

Notice that $E_j$ is not diffeomorphic to $E_{j'}$ if $j\neq j'$ since $\Lambda_j$ is not isomorphic to $\Lambda_{j'}$.
\end{remark}

After this remark, uniqueness of dual pairs in this context of nilmanifolds is not expected. We address then the existence question.

Proposition \ref{pro.suf} does not guarantee existence of the dual of a given torus bundle $q:E\lra M$ with an admissible $H$. Instead, it states that if such a dual exists, say $E^\vee$, and this is an invariant duality then $E^\vee$ is a quotient of the simply connected nilpotent Lie group $G^\vee$ associated to the Lie algebra $\mgg^\vee$ described in Theorem \ref{teo.infdual}. Also, $H^\vee$ and $F$ are the ones given in the same theorem. In particular, if this is the case, $G^\vee$ would admit a lattice.
\medskip

To finish the paper we prove an existence result under the assumption that $H$ satisfies a rational condition which, in particular, warrants the existence of lattices in $G^\vee$.

We continue with $q:E\lra M$ a principal torus bundle over the nilmanifold $M$ which is equivalent to the bundle $(G,A,\Lambda)$. The set $\Lambda_\bullet=\span_\Z\exp^{-1}(\Lambda)$ is a discrete subgroup of $\mgg$ of maximal rank and $\Lambda_\bullet\cap \ma$ is a discrete subgroup of $\ma$. The quotient map $\beta:G\lra N$ projects $\Lambda$ to the lattice $\Gamma$ in $N$. The discrete subgroup of $\mn$ corresponding to $\Gamma$ is $\Gamma_\bullet=\span_\Z\exp^{-1}(\Gamma)$ and satisfies $\beta_*(\Lambda_\bullet)=\Gamma_\bullet$, where here $\beta_*$ is the differential of $\beta$ at the identity.

The set $\mgg_0=\span_\Q\exp^{-1}(\Lambda)$ is a Lie algebra over $\Q$ and $\mgg=\mgg_0\otimes_\Q \R$. We may choose a basis $\mathcal B=\{Y_1,\ldots,Y_s,X_1,\ldots,X_m\}$ of $\mgg_0$ such that $\Lambda_\bullet\cap \ma=\Z X_1+\cdots+\Z X_m$; the structure coefficients in this basis are rational numbers.

\begin{teo} \label{teo.global} Let $q:E\lra M$ be a principal torus bundle, identified to $(G,A,\Lambda)$,  and let $H$ be an admissible closed 3-form. Let $(\mgg^\vee,\ma^\vee,H^\vee)$ be the dual triple to $(\mgg,\ma,H)$.

Assume $\iota_{X_i}H(Y_j,Y_k)\in \Q$ for all $i,j,k$ in a basis as above. Then there exists a lattice $\Lambda^\vee$ in $G^\vee$ such that $(G^\vee, A^\vee,\Lambda^\vee)$ is invariantly $T$-dual to $q:E\lra M$.
\end{teo}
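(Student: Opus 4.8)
\smallskip

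The plan is to upgrade the Lie-algebra duality of Theorem~\ref{teo.infdual} to an invariant $T$-duality of nilmanifolds. By the converse direction of Proposition~\ref{pro.suf}, once a lattice $\Lambda^\vee\subset G^\vee$ is produced for which $(G^\vee,A^\vee,\Lambda^\vee)$ is a principal torus bundle over the \emph{same} base $M=\Gamma\bs N$, the forms $H^\vee$ and $F$ supplied by Theorem~\ref{teo.infdual} — both left-invariant, hence descending to the relevant quotients — together with the identity $p^*H-p^{\vee*}H^\vee=dF$, which holds already on $\mc$, provide the duality. So the whole content is the existence of such a $\Lambda^\vee$, and this is exactly where the hypothesis $\iota_{X_i}H(Y_j,Y_k)\in\Q$ is used.

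First I would record the structure of $\mgg^\vee$. Taking $\{x_1,\dots,x_m\}=\{X_1,\dots,X_m\}$ in Theorem~\ref{teo.infdual} gives $\mgg^\vee=\mn_{\Psi^\vee}$, with $\Psi^\vee=(\iota_{X_1}H,\dots,\iota_{X_m}H)$ and central ideal $\ma^\vee=\span\{z_1,\dots,z_m\}$. Let $\bar Y_j\in\mn$ be the image of $Y_j$; then $\{\bar Y_1,\dots,\bar Y_s\}$ is a basis of $\mn$ with rational structure constants, so $\mn_0:=\span_\Q\{\bar Y_j\}$ is a rational form of $\mn$, and writing $\iota_{X_l}H=\sum_{j<k}H(X_l,Y_j,Y_k)\,\bar Y^j\wedge\bar Y^k$ we see, from $H(X_l,Y_j,Y_k)=\iota_{X_l}H(Y_j,Y_k)\in\Q$, that each $\iota_{X_l}H$ lies in $\Lambda^2\mn_0^*$. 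Hence $\Psi^\vee\in\Lambda^2\mn_0^*\otimes\span_\Q\{z_l\}$, and $\mgg^\vee_0:=\mn_0\oplus\span_\Q\{z_l\}$ (with the extension bracket) is a rational form of $\mgg^\vee$ in which $\ma^\vee$ is rationally defined. Moreover $\mn_0$ is the rational form attached to the base lattice: $\Gamma_\bullet=\beta_*(\Lambda_\bullet)\subseteq\beta_*(\span_\Q\mathcal B)=\mn_0$, and since $\Gamma_\bullet$ has full rank, $\span_\Q\Gamma_\bullet=\mn_0$.

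The core step — and the main obstacle — is to produce $\Lambda^\vee$, discrete and cocompact in $G^\vee$, with $\Lambda^\vee\cap A^\vee$ a lattice of $A^\vee$ and $\beta^\vee(\Lambda^\vee)=\Gamma$ (for the quotient $\beta^\vee\colon G^\vee\to N$), so that $(G^\vee,A^\vee,\Lambda^\vee)$ fibres over $M$ itself. A direct attempt is $\Lambda^\vee=\langle\exp\Lambda^\vee_\bullet\rangle$ with $\Lambda^\vee_\bullet=\Gamma_\bullet\oplus L^\vee\subset\mgg^\vee_0$, where $L^\vee\subset\span_\Q\{z_l\}$ is a lattice of $\ma^\vee$ fine enough to contain $\Psi^\vee(\Gamma_\bullet,\Gamma_\bullet)$; the Malcev criterion recalled above makes this a lattice of $G^\vee$, and what remains is to check that it meets $A^\vee$ in a lattice and surjects onto $\Gamma$ itself, not merely onto a commensurable lattice of $N$. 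Here rationality really bites: because $\Psi^\vee\in\Lambda^2\mn_0^*\otimes\span_\Q\{z_l\}$, a smooth group $2$-cocycle on $N$ representing the central extension $1\to A^\vee\to G^\vee\to N\to1$ can be taken polynomial with rational coefficients in Malcev coordinates adapted to $\mn_0=\span_\Q\Gamma_\bullet$, so its restriction to $\Gamma\times\Gamma$ takes values in $\span_\Q\{z_l\}$, hence in a lattice $\Xi$ of $A^\vee$ after rescaling. The extension of $\Gamma$ by $\Xi$ built from this restricted cocycle is a subgroup $\Lambda^\vee\subset G^\vee$ with $\Lambda^\vee\cap A^\vee=\Xi$ and $\beta^\vee(\Lambda^\vee)=\Gamma$; being commensurable with the candidate above it is discrete and cocompact. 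Thus $(G^\vee,A^\vee,\Lambda^\vee)$ is a principal torus bundle over $M$, and $H^\vee$ — invariant and admissible for $(\mgg^\vee,\ma^\vee)$ — is admissible for it.

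Finally I would assemble the duality. Since $\beta(\Lambda)=\beta^\vee(\Lambda^\vee)=\Gamma$, the subgroup $\Lambda_\mc:=\{(\lambda,\lambda')\in\Lambda\times\Lambda^\vee:\beta(\lambda)=\beta^\vee(\lambda')\}$ is a lattice in the correspondence group $C$ (with Lie algebra $\mc$), $\Lambda_\mc\bs C=E\times_M E^\vee$, the maps induced by $p,p^\vee$ are the bundle projections, and $F$ descends to an invariant $2$-form on $E\times_M E^\vee$ which is non-degenerate on the fibres (visible already on $\mc$) and satisfies $p^*H-p^{\vee*}H^\vee=dF$. Therefore $(E^\vee,H^\vee)$ is invariantly $T$-dual to $q\colon E\to M$. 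The genuinely delicate ingredient is the rescaling argument: rationality of the $\mn_0$-class of $\Psi^\vee$ — i.e.\ the hypothesis — is precisely what lets one pass to an integral cocycle over a refined lattice in $A^\vee$ without enlarging or shrinking the base.
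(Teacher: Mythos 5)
Your proposal is correct and follows essentially the same route as the paper: use the hypothesis $\iota_{X_i}H(Y_j,Y_k)\in\Q$ to get rational structure constants for $\mgg^\vee$ in the basis $\{Y_j,Z_l\}$, set $\Lambda^\vee_\bullet=\Gamma_\bullet+\Z Z_1+\cdots+\Z Z_m$, invoke Malcev's/Raghunathan's criterion to obtain the lattice $\Lambda^\vee=\langle\exp\Lambda^\vee_\bullet\rangle$, and verify $\beta^\vee(\Lambda^\vee)=\Gamma$ so that the bundle sits over the same base $M$. Your additional remarks on the integral group cocycle and the explicit descent of $F$ to the correspondence space only elaborate details the paper leaves implicit.
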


\begin{proof} Let  $Z_1,\ldots, Z_m$ be a basis of $\ma^\vee$ so that $\mathcal B^\vee=\{Y_1,\ldots,Y_s,Z_1,\ldots,Z_m\}$ is a basis of $\mgg^\vee=\mn\oplus \ma^\vee$. The Lie brackets of these basic elements are
$$[Y_i,Y_j]_{\mgg^\vee}= [Y_i,Y_j]_{\mn}+\iota_{X_1}H(Y_i,Y_j)Z_1+\cdots + \iota_{X_m}H(Y_i,Y_j)Z_m.$$ Thus the structure constants corresponding to the basis $\mathcal B^\vee$ are rational. Consider the subset $\Lambda_\bullet^\vee=\Gamma_\bullet+\Z Z_1+\cdots+\Z Z_m$ of $\mgg^\vee$; this 
is a discrete (aditive) subgroup of $\mg^\vee$ since $\Gamma_\bullet$ is such a subgroup of $\mn$ and $\mg^\vee=\mn\oplus\ma^\vee$. Moreover, $\Gamma_\bullet$ is of maximal rank $m+s$ and it is contained in $\mgg_0=\span_\Q \mathcal B^\vee$. According to \cite[Theorem 2.12]{RA} the subgroup generated by $ \exp(\Lambda_\bullet^\vee)$, which we shall denote $\Lambda^\vee$, is a discrete subgroup of $G^\vee$ and $\Lambda^\vee\bs G^\vee$ is compact.

We need to show that $A^\vee\Lambda^\vee/A^\vee\simeq \Gamma=A\Lambda/A$ in order to prove that $\Lambda^\vee\bs G^\vee$ is in fact the total space of a torus bundle over $\Gamma\bs N$. 

Denote $\beta^\vee:G^\vee\lra N$ the quotient map and its differential at the identity by $\beta^\vee_*$. Because of the definition and the commutative diagram 
$$\begin{array}{rcl}
\mgg^\vee &\stackrel{\exp}{\lra} &G^\vee\\
\beta^\vee_*\downarrow&&\downarrow \beta^\vee\\
\mn&\stackrel{\exp}\lra &N,
\end{array}$$
we have that $\beta^\vee(\Lambda_\bullet^\vee)=\Gamma_\bullet$ which implies $\beta^\vee_*(\exp \Lambda_\bullet^\vee)=\exp\Gamma_\bullet$ and, since $\beta^\vee$ is an homomorphism $\beta^\vee(\lela\exp \Lambda_\bullet^\vee\rira)=\lela\exp\Gamma_\bullet\rira$, or equivalently, $A^\vee\Lambda^\vee/A^\vee=\beta^\vee(\Lambda^\vee)=\Gamma$. 
\end{proof}

\begin{ex}\label{KTTglobal} Let $G=H_3\times \R$ modeled on $\R^4$ using the coordinates $(x,y,z,t)$ where $(x,y,z)\in H_3$ given in Example \ref{mathai} and $t$ is the coordinate on $\R$. We have $X_i$, $i=1,\cdots,4$ a basis of left invariant vector fields such that the only nonzero bracket on this basis is $[X_1,X_2]=X_3$. Let $\Lambda$ be the discrete subgroup of $G$ of points with each coordinate an integer, $\Lambda=\Z^4$. The manifold $E=\Lambda\bs G$ is known as the Kodaira-Thurston nilmanifold. Let $A=Z(G)$ and $H=0$ then, clearly, $H$ is an admissible closed 3-form for $(G,A,\Lambda)$ and $H$ satisfies the rational condition in Theorem \ref{teo.global}.
Moreover, in Example \ref{KTT} we have seen that $(\mh_3\oplus\R,\ma,0)$ is dual to $(\ma_4,\ma_2,H^\vee)$ with $\ma_i$ abelian Lie algebras of dimension $i$, and $H^\vee\neq 0$. 
Thus there exists a lattice $\Lambda^\vee\subset \R^4$ such that  $(E,0)$ is $T$-dual to $T^4=(\Lambda^\vee\bs \R^4,H^\vee)$. In particular, there is a bijection between invariant generalized complex structures on $E$ and those on $T^4$, with the corresponding $3$-forms.\end{ex}

\begin{remark}
\begin{enumerate}
\item[a)] $T$-duality between the Kodaira-Thurston nilmanifold and the 4-dimen\-sional torus, and the correspondence of generalized complex structures,  has been studied by Aldi and Heluani \cite{A-H}, via the understanding of the {\em complex structures} on the 8-dimensional product space $E\times T^4$. 

\item[b)] The {\em homological mirror symmetry} between the Kodaira-Thurston nilmanifold and $T^4$ was recently established by Abouzaid, Auroux, Katzar\-kov and Orlov \cite{AAKO} (see also \cite{Ab-ICM}).

\item[c)] In \cite{CG} it was already noticed that every 2-step nilmanifold with vanishing 3-form is $T$-dual to a torus with
nonvanishing 3-form. 

\end{enumerate}
\end{remark}\bigskip

\bibliographystyle{plain}

\end{document}